\newcommand{\mcm}[3]{\newcommand{#1}[#2]{{\ensuremath{#3}}}} 
\mcm{\tuple}{1}{\langle #1 \rangle}
\mcm{\name}{1}{\ulcorner #1 \urcorner}
\mcm{\Nbb}{0}{\mathbb{N}}
\mcm{\Zbb}{0}{\mathbb{Z}}
\mcm{\Rbb}{0}{\mathbb{R}}
\mcm{\Cbb}{0}{\mathbb{C}}
\mcm{\Qbb}{0}{\mathbb{Q}}
\mcm{\Acal}{0}{\cal A}
\mcm{\Bcal}{0}{\cal B}
\mcm{\Ccal}{0}{\cal C}
\mcm{\Dcal}{0}{\cal D}
\mcm{\Ecal}{0}{\cal E}
\mcm{\Fcal}{0}{\cal F}
\mcm{\Gcal}{0}{\cal G}
\mcm{\Hcal}{0}{\cal H}
\mcm{\Ical}{0}{\cal I}
\mcm{\Jcal}{0}{\cal J}
\mcm{\Kcal}{0}{\cal K}
\mcm{\Lcal}{0}{\cal L}
\mcm{\Mcal}{0}{\cal M}
\mcm{\Ncal}{0}{\cal N}
\mcm{\Ocal}{0}{{\cal O}}
\mcm{\Pcal}{0}{{\cal P}}
\mcm{\Qcal}{0}{{\cal Q}}
\mcm{\Rcal}{0}{{\cal R}}
\mcm{\Scal}{0}{{\cal S}}
\mcm{\Tcal}{0}{{\cal T}}
\mcm{\Ucal}{0}{{\cal U}}
\mcm{\Vcal}{0}{{\cal V}}
\mcm{\Wcal}{0}{{\cal W}}
\mcm{\Xcal}{0}{{\cal X}}
\mcm{\Ycal}{0}{{\cal Y}}
\mcm{\Mfrak}{0}{\mathfrak M}
\mcm{\restric}{0}{\upharpoonright}
\mcm{\upset}{0}{\uparrow}
\mcm{\onto}{0}{\twoheadrightarrow}
\mcm{\smallNbb}{0}{{\small \mathbb{N}}}
\DeclareMathOperator{\preop}{op}
\mcm{\op}{0}{^{\preop}}
\newcommand{\se}{\subseteq}
\newcommand{\theoremize}[2]{\newaliascnt{#1}{thm} \newtheorem{#1}[#1]{#2} \aliascntresetthe{#1}}
\theoremstyle{plain}
\newtheorem{thm}{Theorem}[section]
\theoremstyle{definition}
\theoremstyle{plain}
\title{Embedding simply connected \\ 2-complexes in 3-space\\ \Large III. Constraint minors}
\author{Johannes Carmesin
\medskip 
\\
  {University of Cambridge}
}
\newcommand{\sm}{\setminus}
\begin{document}

\maketitle

\begin{abstract}
We characterise the following property by six obstructions: given a graphic matroid 
$M$ and a set $X$ of its elements, when is $M$ the cycle matroid of a graph $G$ such 
that $X$ is a connected edge set in $G$? 
\end{abstract}

\section{Introduction}
For a purely graph-theoretic introduction read \autoref{sec:graph}. 

Tutte \cite{TutteBook} proved that a matroid can be represented by a graph if and only if 
it has no minor isomorphic to $U_{2,4}$, the fano-plane, the dual fano-plane or the dual matroids 
of the two nonplanar graphs $K_5$ or $K_{3,3}$. The topic of this paper is the following related 
reconstruction question: given a graphic matroid 
$M$ and a set $X$ of its elements, when is $M$ the cycle matroid of a graph $G$ such 
that $X$ is a connected edge set in $G$? Our motivation for studying that question is that in 
\cite{3space4} it  arises when characterising embeddability in 3-space of 
certain 2-complexes by excluded minors. 
\vspace{.3cm}

A \emph{constraint matroid} is a pair $(M,X)$, where $M$ is a matroid and $X$ is a set of 
elements  
of $M$.
A constraint matroid $(M,X)$ is \emph{realisable} if $M$ is the cycle matroid of a graph 
$G$ such that $X$ is a connected edge set in $G$. The class  
of constraint matroids $(M,X)$ that are realisable is closed 
under contracting arbitrary elements and deleting elements not in $X$. 
A constraint matroid obtained by these operations from $(M,X)$ is a \emph{constraint minor} of 
$(M,X)$. In this paper we characterise the class of the realisable  (graphic) constraint
matroids by excluded constraint minors.

\begin{thm}\label{new_intro}
A graphic constraint matroid is realisable if and only if it does not have one 
of the six constraint minors depicted in \autoref{fig:k4}, 
\autoref{fig:wheel} or 
\autoref{fig:constraint_prisms}.
\end{thm}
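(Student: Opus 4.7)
I plan to prove \autoref{new_intro} in two directions. The forward direction is a finite verification: since realizability is closed under taking constraint minors (as noted just above the theorem), it suffices to check that none of the six depicted constraint matroids is itself realizable. For each, Whitney's 2-isomorphism theorem reduces this to inspecting the finite list of graphs obtained from a fixed representative by Whitney twists at 2-separations, and verifying that $X$ is disconnected in each.

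For the backward direction I argue by induction on $|E(M)|$, splitting by the matroid connectivity of $M$. If $M$ is 3-connected, Whitney's theorem yields a unique graph $G$ with $M(G)=M$, and realizability of $(M,X)$ reduces to the graph-theoretic question of whether $X$ is connected in this unique $G$. Assuming it is not, I pick two components of $X$ together with a minimal collection of edges of $G\setminus X$ joining them, and contract and delete down to a minimal substructure. Using 3-connectivity of $M$ together with the minimality of the linking edges, one shows that the resulting constraint minor must be one of the $K_4$-based or wheel-based obstructions of \autoref{fig:k4} or \autoref{fig:wheel}.

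The main obstacle is the case where $M$ has a non-trivial 2-separation $(E_1,E_2)$. Writing $M = M_1 \oplus_2 M_2$ along a virtual element $e$, one applies the inductive hypothesis to suitably chosen constraint matroids $(M_i, X_i')$ on each side. The difficulty is that local realizations of the two sides need not glue into a global realization with $X$ connected: the Whitney twist available at the glue vertex may fail to merge components of $X$ that straddle the 2-separation. Overcoming this requires a strengthened inductive statement that records, for each side, the positions of the endpoints of $e$ relative to the components of $X_i'$ in each realization, and then solves a combinatorial matching problem across the 2-sum. I expect the prism obstructions of \autoref{fig:constraint_prisms} to be precisely the minimal witnesses of an unfixable misalignment of this kind, and establishing that every failure of gluing produces a prism constraint minor is the technical heart of the proof.
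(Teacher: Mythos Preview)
Your allocation of the obstructions between the two cases is wrong, and this misreading drives the whole plan off course. All six obstructions are 3-connected (the prism is a 3-connected graph), so each of them already arises in your ``easy'' case where $M$ is 3-connected and is represented by a unique graph $G$. In particular, if $G$ is the prism and $X$ is one of the four configurations of \autoref{fig:constraint_prisms}, then $X$ is disconnected in $G$ and no $K_4$- or wheel-constraint minor is present; your sketch for the 3-connected case (``pick two components and a minimal set of linking edges and contract down'') cannot terminate in only the $K_4$ or the wheel. The paper's 3-connected case is in fact the technical heart of the proof: one first reduces to $G[X]$ having exactly two components (handling a sporadic ``weird prism'' along the way), then uses Bixby's Lemma to argue that every edge outside $X$ has both endpoints in $V(X)$, and finally proves a separate structural lemma (\autoref{minor_char_H}) that any graph 3-connected along a bond with nonempty sides contracts to a special $K_4$ or a special prism. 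The prism obstructions come out of this last step, not from any 2-sum analysis.

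Conversely, the 2-sum case that you flag as the ``main obstacle'' is in the paper essentially routine and produces no new obstructions. The point you are missing is that when both $X_i = X \cap E(M_i)$ are nonempty and the virtual element $e$ lies in neither, the pair $(M_1, X_1 + e)$ is \emph{also} a constraint minor of $(M,X)$: one contracts the other side down to a circuit through $e$ all of whose non-virtual elements lie in $X$. Hence by induction one gets a realization $G_1'$ in which $X_1 + e$ is connected, so some element of $X_1$ meets an endvertex of $e$; likewise on the other side. A single Whitney flip at the 2-separator then aligns the two, so the gluing always succeeds. There is no combinatorial matching problem to solve and no ``unfixable misalignment'' to witness.
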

All these six obstructions are 3-connected and graphic. So we 
just depict their unique graphs. \autoref{new_intro} can be restated in purely graph theoretic 
terms, see \autoref{main_intro} below.
   \begin{figure} [htpb]   
\begin{center}
   	  \includegraphics[height=1.5cm]{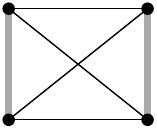}
   	  \caption{The constraint $K_4$. The edges in $X$ are depicted grey. }\label{fig:k4}
\end{center}
   \end{figure}

   \begin{figure} [htpb]   
\begin{center}
   	  \includegraphics[height=1.5cm]{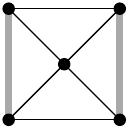}
   	  \caption{The constraint wheel. The edges in $X$ are depicted grey. }\label{fig:wheel}
\end{center}
   \end{figure}
   \begin{figure} [htpb]   
\begin{center}
   	  \includegraphics[height=2cm]{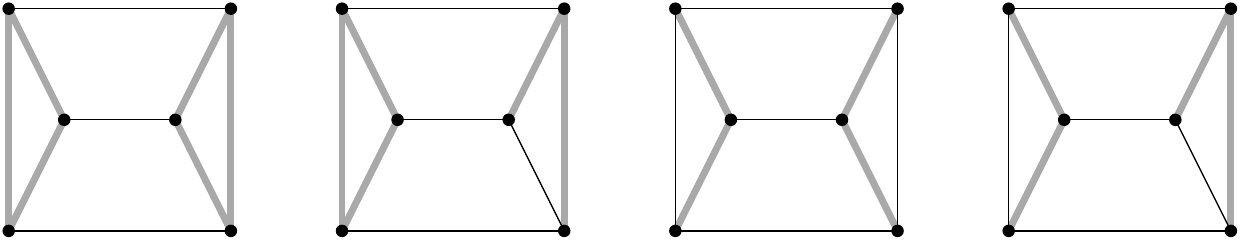}
   	  \caption{The four constraint prisms. The edges in $X$ are depicted grey. 
}\label{fig:constraint_prisms}
\end{center}
   \end{figure}

  \section{A graph theoretic perspective}\label{sec:graph}
  
   Although \autoref{new_intro} is about matroids, most of this paper is about the following 
equivalent graph theoretic version.

A \emph{constraint graph} is a pair $(G,X)$, where $G$ is a graph and $X$ is an edge set of $G$.
A constraint graph is \emph{constraint connected} if $X$ is a connected edge set in $G$. The class  
of constraint graphs $(G,X)$ that are constraint connected is closed 
under contracting arbitrary edges and deleting edges not in $X$. 
A constraint graph obtained by these operations from $(G,X)$ is a \emph{constraint minor} of 
$(G,X)$. 
It is straightforward to show that a 2-connected\footnote{A constraint graph 
$(G,X)$ is \emph{$k$-connected} if $G$ is $k$-connected.} constraint graph $(G,X)$ is constraint 
connected if and only if it has no constraint minor isomorphic to the 4-cycle whose constraint 
consists of two opposite edges. The analogue question for connected graphs is not much more 
interesting. 

However, it turns out that the question gets nontrivial if we restrict our attention to 3-connected 
graphs. 
 \begin{thm}\label{main_intro}
A 3-connected constraint graph $(G,X)$ is constraint connected if and only if it does not have one 
of the six (3-connected) constraint minors  depicted in \autoref{fig:k4}, \autoref{fig:wheel} or 
\autoref{fig:constraint_prisms}.
\end{thm}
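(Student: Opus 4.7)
The plan is to prove the two directions of the equivalence separately. For necessity I would first observe that constraint connectedness is preserved under constraint minors: contracting an edge $e$ keeps the connected components of the edge set $X \sm \{e\}$ intact (possibly identifying endpoints), and deleting an edge outside $X$ does not touch the subgraph spanned by $X$ at all. A direct inspection then shows that each of the six depicted obstructions fails to be constraint connected, so any $(G,X)$ having one of them as a constraint minor cannot be constraint connected either.

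For sufficiency I would run a minimal-counterexample induction on $|E(G)|$. Suppose $(G,X)$ is 3-connected, not constraint connected, not isomorphic to any of the six obstructions, and chosen so that no proper constraint minor is isomorphic to one of them. My aim is to produce a proper constraint minor $(G',X')$ that is still 3-connected and still not constraint connected; by minimality such a $(G',X')$ (or, in turn, one of its proper constraint minors) must be among the six, which yields a forbidden minor of $(G,X)$ and a contradiction.

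To produce such a $(G',X')$, let $X_1,\ldots,X_k$ with $k\ge 2$ be the connected components of $X$ regarded as an edge set in $G$, and let $V_i$ be the vertex set of $X_i$. I would call an edge of $G$ \emph{safe} if its endpoints do not lie in two distinct $V_i$, $V_j$; contracting a safe edge preserves non-constraint-connectedness, and deleting any edge outside $X$ preserves it automatically. The key step is to locate a safe edge whose contraction preserves 3-connectedness (or a safe edge outside $X$ whose deletion does so). Tutte's wheel theorem supplies some 3-connectedness-preserving contraction whenever $G$ is not a wheel, and I would refine its proof by a counting or parity argument on the way the $V_i$ meet the non-trivial 3-separators of $G$ to ensure that at least one such contractible edge can be chosen safe.

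The main obstacle will be the degenerate configurations in which no safe, 3-connectedness-preserving edge exists. These impose strong rigidity: every edge between distinct components of $X$ must then be forced to lie on every Tutte-contractible position, and together with 3-connectedness this leaves only a small list of possible global shapes for $G$, essentially wheels, prisms and $K_4$. The bulk of the proof will then be a case analysis of these exceptional shapes, showing in each case that the placement of $X$ forces $(G,X)$ to be isomorphic to one of the six obstructions in \autoref{fig:k4}, \autoref{fig:wheel} or \autoref{fig:constraint_prisms}, contradicting the minimality assumption and closing the induction.
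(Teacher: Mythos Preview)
The necessity direction is fine.

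For sufficiency there is a genuine gap. Your plan hinges on (i) finding a safe edge whose contraction or deletion keeps $G$ 3-connected, and (ii) the claim that when no such edge exists the graph must already be a wheel, a prism, or $K_4$. Neither step is established. Tutte's wheel theorem gives \emph{some} edge $e$ with $G/e$ or $G\setminus e$ 3-connected, but gives no control over where $e$ sits relative to the components $V_i$; the promised ``counting or parity argument'' would have to be invented from scratch. More seriously, claim (ii) is false: a minimal counterexample can be arbitrarily large. Once one reduces (as the paper does) to the case where $G[X]$ has exactly two components $C_1,C_2$ and every non-$X$ edge runs between them, contracting an edge inside a side typically creates a degree-$2$ vertex and destroys 3-connectedness --- yet the sides $C_1,C_2$ can be any 2-connected graphs, not just triangles.

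The paper avoids this trap by a two-phase argument with a change of invariant in between. Phase one applies Bixby's Lemma to \emph{every} edge $e\notin X$: minimality forces a specific local configuration at $e$ (\autoref{not_X_essential}), and these local constraints combine to give exactly two components with all non-$X$ edges crossing between them (\autoref{summary}). At that point the non-$X$ edges form a bond $d$, and the paper \emph{relaxes} the invariant from ``3-connected'' to ``3-connected along $d$'' (no 2-separator lies entirely on one side). Under this weaker invariant one \emph{can} always contract on the larger side (\autoref{not_2_con}, \autoref{both_2_con}), and the irreducible pairs are precisely the special $K_4$ and the special prism (\autoref{minor_char_H}); translating back yields the six obstructions. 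The idea you are missing is this relaxation: insisting on full 3-connectedness at every intermediate step is too rigid for the induction to go through.
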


It is straightforward to deduce \autoref{main_intro} from \autoref{new_intro} above. However the 
converse is also true as follows.

\begin{proof}[Proof that \autoref{main_intro} implies \autoref{new_intro}.]
Let $(M,X)$ be a constraint matroid. If $M$ is 3-connected, then it is the cycle matroid of a 
unique 
graph $G$ by a theorem of Whitney \cite{Whitney_flip}. In this case \autoref{new_intro} for $(M,X)$ 
is a 
restatement of \autoref{main_intro} for $(G,X)$. 
 
Now let $(M,X)$ be a constraint matroid that has no constraint minor depicted in  \autoref{fig:k4}, 
\autoref{fig:wheel} or 
\autoref{fig:constraint_prisms}. It remains to show that $(M,X)$ is realisable. 
Since a constraint matroid is realisable if and only if each of its 2-connected components is, 
we may assume that $M$ is 2-connected. 

Now we prove by induction that $(M,X)$ is realisable. The base case is that $M$ is 3-connected.

If $M$ is not 3-connected, its Tutte-decomposition \cite{TutteGrTh} has a non-trivial 
2-separation $(A,B)$. 
Let $M_1$ and $M_2$ be the two matroids obtained by decomposing $M$ along the 2-separation 
$(A,B)$. In particular, $M_1$ and $M_2$ both contain a virtual element $e$ and the 
2-sum\footnote{See 
\cite{oxley2} for a definition. } of $M_1$ and $M_2$ along $e$ is $M$.
Note that the $M_i$ can be obtained from $M$ by contracting elements and replacing a parallel 
class by the virtual element $e$. 
For $i=1,2$, let $(M_i,X_i)$ be the constraint matroid, where $X_i$ is $X\cap E(M_i)$ plus possibly 
$e$ if $M_{i+1}$ 
contains a circuit $o$ such that $o-e\se X$. 
It is straightforward to check that the $(M_i,X_i)$ are 
constraint minors of $M$. Hence by induction, they are realisable. Let $G_i$ 
be a graph realising 
$(M_i,X_i)$.

Let $G$ be the 2-sum of the graphs $G_1$ and $G_2$ along the 
virtual element $e$. By construction $M$ is the cycle matroid of $G$. If the virtual element $e$ is 
in 
$X_1$ or $X_2$, it is straightforward to see that $(G,X)$ is constraint connected. So $M$ is 
realisable. So we may assume that $e$ is in no $X_i$. If one of the $X_i$ is 
empty, then $(G,X)$ is constraint connected. So we may assume that both $X_i$ are nonempty.

Then not only $(M_1,X_1)$ but also $(M_1,X_1+e)$ is a constraint minor of $(M,X)$. 
So by induction there is a graph $G_1'$ realising $(M_1,X_1+e)$. In $G_1'$ an element 
of the set $X_1$ is incident with an endvertex of $e$. Similarly, there is a graph $G_2'$ realising 
$(M_2,X_2+e)$, and there is an element of the set $X_2$ 
is incident with an endvertex of $e$.
Let $G'$ be the 2-sum of the graphs $G_1'$ and $G_2'$. By flipping\footnote{By a theorem 
of Whitney, graphs represented by a 2-connected matroid are 
unique up to 
flipping 2-separators \cite{Whitney_flip}. } the 2-separator given by the endvertices of $e$ in 
$G'$ 
if necessary, 
we ensure that $X$ is connected in $G'$. Put another way, $(G',X)$ is constraint connected 
witnessing that $(M,X)$ is realisable.
 \end{proof}

Hence the rest of this paper is dedicated to the proof of \autoref{main_intro}, which is purely 
graph-theoretic. 
Before jumping into the proof, let us fix a few lines of notation.
In this paper all graphs are simple. 
In particular, if we contract an edge, we afterwards delete all but one edge from every parallel 
class. In the context of a constraint graph $(G,X)$, we first delete edges in a parallel 
classes that are not in $X$ (so that constraint minors on simple graphs preserve constraint 
connectedness). Throughout this paper we follow the convention that the empty set is a connected 
edge set in $G$. Beyond that we follow the notation of \cite{DiestelBookCurrent}. 
Let's get started 
with the proof.

\section{Deleting and contracting edges outside the constraint}

In this section we prove \autoref{summary} below, which is used in the proof of 
\autoref{main_intro}.

Given a constraint graph $(G,X)$, an edge $e$ not in $X$ is \emph{essential} if neither $(G/e,X)$ 
nor $(G\sm e,X)$ has a 3-connected constraint minor $(G',X')$ such that $X'$ is disconnected.  
Informally, \autoref{summary} below gives a structural description of the constraint graphs $(G,X)$ 
in which every edge not in $X$ is essential. 

Before we can prove \autoref{summary} we need some 
preparation. 
Our first aim is to prove the following.

\begin{lem}\label{situation3}
Let $(G,X)$ be a 3-connected constraint graph that is not constraint connected.
Assume that every edge not in $X$ is essential.
Then $G[X]$ has precisely two connected components or $(G,X)$ is the weird prism (defined in 
\autoref{bad_prims}).
\end{lem}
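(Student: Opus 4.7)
The plan is to argue by the contrapositive. Since $(G,X)$ is 3-connected but not constraint connected, $G[X]$ has $k \ge 2$ components; I assume $k \ge 3$ and aim to show $(G,X)$ must be the weird prism.

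\emph{Step 1 (Essentiality as a local constraint).} I first translate essentiality into a constraint on single-edge operations. For every $e \notin X$, the subgraph $G[X]$ is the same in $G$ and in $G \sm e$, so $X$ remains disconnected in $G \sm e$; if $G \sm e$ were 3-connected, then $(G \sm e, X)$ would itself be a 3-connected constraint minor of $(G \sm e, X)$ with disconnected constraint, contradicting essentiality of $e$. Similarly, contracting a single edge merges at most two components of $G[X]$, so with $k \ge 3$ at least two components survive in $G/e$; the same reasoning rules out $G/e$ being 3-connected. Thus for every $e \notin X$, neither $G \sm e$ nor $G/e$ is 3-connected.

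\emph{Step 2 (Bixby-style analysis).} By Bixby's lemma for graphs, for any edge $e$ of a 3-connected simple graph on at least four vertices, either the simplification of $G/e$ or the cosimplification of $G \sm e$ is 3-connected. Applied to $e \notin X$, Step 1 rules out the raw $G/e$ and $G \sm e$, so the simplification/cosimplification step is nontrivial and yields a strictly smaller 3-connected constraint minor of $(G/e,X)$ or $(G \sm e, X)$. By essentiality its constraint must be connected, but simplification and cosimplification each merge at most one pair of components of $G[X]$; with $k \ge 3$ initial components, this forces several edges of $X$ to be consumed by the operation. Locally this pins $e$ to a small $X$-configuration, such as a degree-2 vertex of $G \sm e$ both of whose incident edges lie in $X$.

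\emph{Step 3 (Global identification).} I combine the local constraints forced by Step 2 with the global assumptions -- that $G$ is 3-connected, $G[X]$ has at least three components, and every $e \notin X$ is essential -- and run a case analysis on the sizes and adjacencies of the components of $G[X]$, distinguishing between single edges, paths, and larger subgraphs. 3-connectivity together with essentiality should rule out every configuration except the weird prism of \autoref{bad_prims}.

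The main obstacle is Step 3: turning the local consequences into a rigid global identification of $(G,X)$. The difficulty is that essentiality forbids \emph{any} sequence of minor operations producing a 3-connected minor with disconnected constraint, not just single-step ones, so the case analysis must track how iterated contractions and deletions interact with the components of $G[X]$ and eliminate every sporadic configuration other than the weird prism.
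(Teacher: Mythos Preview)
Your framework matches the paper's --- Bixby's lemma applied to each $e\notin X$ is exactly the right engine --- but Steps 2 and 3 are genuinely incomplete, and the missing content is the substance of the proof.

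In Step 2 you are missing a crucial \emph{choice}. When an endpoint $v$ of $e$ has degree $2$ in $G\setminus e$, you may contract either incident edge when cosimplifying; the paper always contracts the one in $X$ if exactly one of the two lies in $X$. With this rule no contraction identifies vertices from two different components of $G[X]$, so components are never \emph{merged}; a component can only be \emph{lost} by having all its edges contracted, which forces it to be a single edge incident with $v$. Since at most two suppressions occur, the constraint of $\mathrm{co}(G\setminus e)$ becomes connected only in one very specific situation: $G[X]$ has exactly three components, two of them single edges, both endpoints of $e$ have degree $3$, and each is incident with one of those single edges. Call such an $e$ \emph{H-shaped}. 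Your line ``simplification and cosimplification each merge at most one pair of components'' is the wrong bookkeeping and obscures this.

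Step 3 is then short, not a sprawling case analysis. If every $e\notin X$ is H-shaped, every vertex has degree $3$ with exactly one incident edge in $X$, so $G[X]$ consists of three disjoint single edges, $|V(G)|=6$, and the non-$X$ edges form a $2$-regular simple graph on six vertices: either two triangles or a $6$-cycle. The two-triangle case is the weird prism. The $6$-cycle case yields two further sporadic graphs --- the constraint Wagner graph and the Wagner prism --- and each of these has a pair of non-$X$ edges whose contraction gives the constraint $K_4$, so each contains a non-essential edge and is excluded by hypothesis. You never mention these two sporadic graphs; identifying and disposing of them is an indispensable part of the argument. Finally, your worry about tracking ``iterated contractions and deletions'' is a red herring: one contraction or one deletion followed by a single round of cosimplification already produces the witnessing $3$-connected minor whenever $e$ is not H-shaped.
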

First we consider some particular examples that will come up in the proof of \autoref{situation3}.

\begin{eg}\label{bad_prims}
The \emph{weird prism} is the pair $(P,X)$, where $P$ is the prism and $X$ consists of the three 
edges in the complement of the two triangles, see \autoref{fig:weird}.
Contracting any particular edge in $X$, gives the constraint wheel. 
   \begin{figure} [htpb]   
\begin{center}
   	  \includegraphics[height=2.3cm]{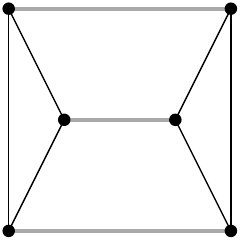}
   	  \caption{The weird prism. The edges in $X$ are depicted grey. 
}\label{fig:weird}
\end{center}
   \end{figure}
\end{eg}

\begin{eg}\label{wagner}
The \emph{constraint Wagner graph} is the pair $(W,X)$, where $W$ is the Wagner graph and $X$ is 
the set 
of edges in the complement of one of its six-cycles, see \autoref{fig:wagner}. 
If we contract a single edge of $X$, we get the constraint wheel. If we contract any two opposite 
edges on the six cycle, then we get a constraint $K_4$. 
   \begin{figure} [htpb]   
\begin{center}
   	  \includegraphics[height=2.3cm]{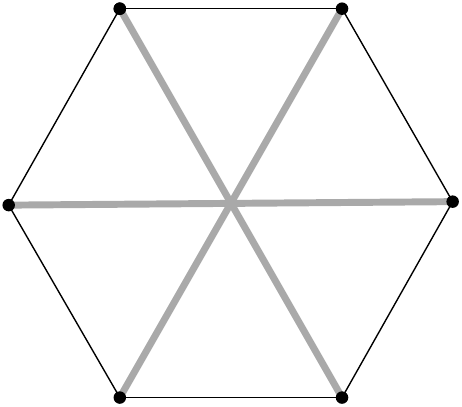}
   	  \caption{The constraint Wagner graph. The edges in $X$ are depicted grey. 
}\label{fig:wagner}
\end{center}
   \end{figure}
\end{eg}

\begin{eg}\label{wagner_prism}
The \emph{Wagner prism} is the pair $(W',X')$, where $W'$ is the prism and $X'$ contains one 
edge not in the two triangles of the prism. The two other edges in $X'$ are the only two edges of 
the prism in the triangles that are vertex-disjoint to that edges, see \autoref{fig:wagner_prism}. 
There are two opposite edges on the six cycle formed by the edges not in $X'$ whose contraction 
gives the constraint $K_4$. 
   \begin{figure} [htpb]   
\begin{center}
   	  \includegraphics[height=2.3cm]{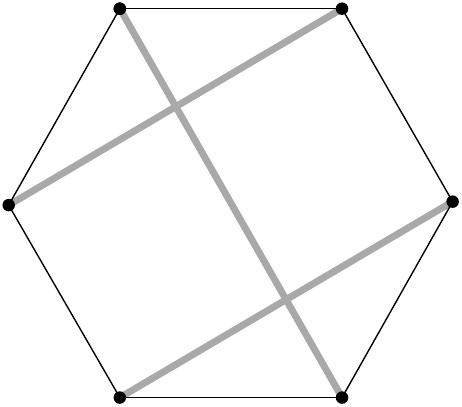}
   	  \caption{The Wagner prism. The edges in $X$ are depicted grey. 
}\label{fig:wagner_prism}
\end{center}
   \end{figure}
\end{eg}

\begin{lem}\label{contract_or}
Let $(G,X)$ be a 3-connected constraint graph such that $G[X]$ has at 
least 3-connected components.
Assume that $G$ is not the constraint Wagner graph, not the weird prism and not the Wagner prism.
Then there is a 3-connected constraint minor $(G',X')$ of $(G,X)$ such that $X'$ is disconnected in 
$G'$ and such that $E(G')\sm X'$ is a proper subset of $E(G)\sm X$.
\end{lem}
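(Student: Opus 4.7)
The plan is to argue by contradiction: assume that no non-$X$ edge $e$ makes either $G/e$ or $G\sm e$ 3-connected (after the automatic simplification that removes non-$X$ edges from parallel classes), and deduce that $(G,X)$ must be one of the three excluded examples. Two preliminary observations simplify the task. First, since $G[X]$ has at least three components and each contains at least one edge, $|V(G)|\ge 6\ge 5$. Second, deletion leaves $G[X]$ unchanged while contracting a single edge can merge at most two of its components, so after one such operation on a non-$X$ edge the resulting $X'$ still has $\ge 2$ components and is in particular disconnected in $G'$. Hence the ``$X'$ disconnected'' clause of the conclusion is automatic, and it suffices to find a non-$X$ edge $e$ such that $G/e$ or $G\sm e$ is 3-connected; the inclusion $E(G')\sm X'\subsetneq E(G)\sm X$ then follows automatically from the removal or contraction of $e$.

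Under the contradiction hypothesis, every non-$X$ edge $e=uv$ is simultaneously non-contractible and non-removable in $G$. Since $|V(G)|\ge 5$, standard $3$-connectivity lemmas give: non-contractibility of $e$ yields a vertex $w_e\neq u,v$ such that $\{u,v,w_e\}$ is a vertex-$3$-cut of $G$; non-removability of $e$ yields a pair $\{a_e,b_e\}$ forming a $2$-cut of $G\sm e$, equivalently a $3$-separation of $G$ whose unique crossing edge is $e$. These twin constraints must be satisfied for \emph{every} non-$X$ edge, in conjunction with the global hypothesis that $G[X]$ has at least three components.

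The main work is then to combine these local obstructions with the component structure of $G[X]$ to pin down $G$ and $X$. I would propagate the $3$-cut structure along chains of non-$X$ edges: since every non-$X$ edge sits in a $3$-cut together with its endpoints, two incident non-$X$ edges link their associated $3$-cuts, and the resulting overlap (combined with the $3$-separations coming from non-removability) either forces a small local configuration or contradicts the $3$-connectivity of $G$. The propagation should cap $|V(G)|$ at around $8$. One then enumerates the relevant $3$-connected graphs on $6$ to $8$ vertices, together with all edge subsets $X$ yielding at least three components and obeying both obstructions at every non-$X$ edge, and checks that the surviving configurations are precisely the triangular prism with its two admissible $X$'s (the weird prism and the Wagner prism) and the Wagner graph with its admissible $X$ (the constraint Wagner graph).

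The main obstacle I anticipate is the propagation-and-bounding step: ruling out large $3$-connected $G$ satisfying the twin obstruction at every non-$X$ edge, using the $\ge 3$ components hypothesis to provide the pressure that forces a reducible non-$X$ edge whenever $|V(G)|$ is not small. Once the size bound is achieved, the closing enumeration is a finite case check.
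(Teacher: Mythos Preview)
Your setup is sound: the observation that a single contraction of a non-$X$ edge merges at most two $X$-components, so that $X'$ stays disconnected whenever we started with three or more, is correct and useful. Likewise the reduction of the subset condition to ``remove at least one non-$X$ edge'' is fine.

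However, there is a genuine gap. You reduce the problem to showing that if \emph{no} single non-$X$ edge $e$ has $G/e$ or $G\setminus e$ $3$-connected, then $(G,X)$ is one of the three exceptions---and then you wave at a ``propagation-and-bounding'' argument that you yourself identify as the main obstacle and do not carry out. Everything substantive is hidden in that step; as written, the proposal is a plan rather than a proof. In particular, your claimed bound $|V(G)|\le 8$ is asserted without justification, and the subsequent enumeration (over all $3$-connected graphs on up to $8$ vertices, all admissible $X$, and all non-$X$ edges) would be a nontrivial case analysis even if the bound were established.

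There is also a strategic point. By insisting on a \emph{single} edge operation, you are proving more than the lemma asks: a constraint minor may use several operations. The paper exploits exactly this freedom. For an arbitrary non-$X$ edge $e$, if $G/e$ is $3$-connected we are done; otherwise Bixby's Lemma guarantees that $G\setminus e$ becomes $3$-connected after suppressing its (at most two) degree-$2$ vertices. One then \emph{chooses} which edge to contract in each serial class---contracting the $X$-edge when the class is mixed---so that no path between two $X$-components is fully contracted. This yields a $3$-connected $(G',X')$ with $E(G')\setminus X'\subsetneq E(G)\setminus X$, and $X'$ is disconnected \emph{unless} a very specific local obstruction occurs: both endpoints of $e$ have degree $3$, each lies in a single-edge $X$-component, and $G[X]$ has exactly three components. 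If every non-$X$ edge is obstructed in this way, one reads off immediately that $G$ has six vertices, each of degree $3$, with the non-$X$ edges forming a $2$-regular subgraph (two triangles or a $6$-cycle), and the three exceptions drop out.

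So the paper's route replaces your open-ended ``propagate $3$-cuts and enumerate'' step with a single application of Bixby's Lemma plus a tailored choice in the cosimplification. If you want to salvage your approach, the missing ingredient is precisely this: rather than recording only that $e$ is non-removable, use Bixby to pass to the cosimplified $G\setminus e$ and analyse how $X$ interacts with the serial classes. That is where the structure that pins down the three exceptions actually lives.
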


\begin{proof}[Proof that \autoref{contract_or} implies \autoref{situation3}.]
By \autoref{wagner}, the constraint Wagner graph has an edge not in $X$ that is not essential.
Thus $(G,X)$ is not the constraint Wagner graph. Similarly, $(G,X)$ is not the Wagner prism by 
\autoref{wagner_prism}. Hence by \autoref{contract_or}, $G[X]$ has 
precisely two 
connected components  or is the weird prism.
\end{proof}

\begin{proof}[Proof of \autoref{contract_or}.]
Let $e$ be an arbitrary edge not in $X$.
 If the simple graph $G'=G/e$ is 3-connected, then $(G', X\cap E(G'))$ is the desired constraint 
minor.
Otherwise by Bixby's Lemma \cite{oxley2} the graph $G\sm e$ is 3-connected after 
suppressing 
edges of degree 2; note that $G$ cannot be $K_4$ as the disconnected set $X$ contains at least 
three edges.
Let $G'$ be the graph obtained from $G\sm e$ by contracting all but one edge from every serial 
class.

By construction, any vertex of degree 2 of $G\sm e$ must be an endvertex of $e$. 
Hence every nontrivial serial class has size two and there are at most two of them. 
If a serial class contains an edge in $X$ and an edge not in $X$, we contract the edge of $X$ in 
the construction of $G'$.
This construction ensures that we never contract all edges of a path in $G$ that connects two 
components of $G[X]$. We let $X'=X\cap E(G')$. 
Hence the components of $G'[X']$ come from those components of $G[X]$ such that not all their edges 
got contracted.

Thus $G'[X']$ is disconnected unless $G[X]$ has precisely three components and two of these 
components just consist of a single edge.
Furthermore both endvertices of $e$ have degree 3 and each of them is incident with one of these 
components consisting of a single edge.
In this case we say that the edge $e$ \emph{H-shaped}.

Since the edge $e$ was arbitrary, we find the desired constraint graph $(G',X')$ unless every edge 
of $G$ not in $X$ is $H$-shaped.
Since $G$ is connected, every component of $G[X]$ is incident with an edge not in $X$. Hence $G[X]$ 
has precisely three components and they all consist of single edges. Furthermore every vertex of 
$G$ is incident with one edge in $X$ and two edges not in $X$. Thus $G$ has precisely six vertices. 
The edges not in $X$ form a vertex-disjoint union of cycles. So as $G$ is a simple graph, 
they either form two vertex-disjoint triangles or a 6-cycle. In the first case it is 
straightforward to check that $(G,X)$ is the weird prism. In the second case it is 
straightforward to check that $(G,X)$ is isomorphic to the constraint 
Wagner graph or the Wagner prism. 
\end{proof}

This completes the proof of \autoref{situation3}. Our next step is to prove the following.

\begin{lem}\label{not_X_essential}
 Let $G$ be a 3-connected graph and let $X$ be an edge set of $G$ such that $G[X]$ has precisely 
two components. 
Let $e\in E(G)\sm X$ be essential. Then one of the following holds.
\begin{enumerate}
 \item $e$ joins the two components of $G[X]$; or
\item there is a component $C$ of $G[X]$ that consists only of a single edge and $e$ has an 
endvertex $v$ of degree three that is incident with that edge and the third edge incident with $v$ 
joins the two components of $G[X]$; or
\item there is a component $C$ of $G[X]$ that consists of precisely two edges, which form a 
triangle together with $e$. The two endvertices of $e$ have degree 3 and are each incident with an 
edge 
that joins the two components of $G[X]$. 
\end{enumerate}
\end{lem}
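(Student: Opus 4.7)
The plan is a proof by contradiction: assume $e$ is essential and that conclusion 1 fails, so $e$ does not join the two components $C_1, C_2$ of $G[X]$, and deduce that conclusion 2 or 3 must hold. The approach mirrors the proof of \autoref{contract_or}. First I would observe that $X$ stays disconnected in $G/e$ after simplifying parallel edges (prioritising $X$-edges), since contracting $e = uv$ merges $u$ and $v$, but this does not merge $C_1$ and $C_2$ when $e$ joins neither. Hence the simplification of $G/e$ cannot be 3-connected, or it would supply a 3-connected constraint minor with disconnected constraint, contradicting essentiality. By Bixby's Lemma, the cosimplification $G''$ of $G \setminus e$ (obtained by suppressing its degree-2 vertices) is therefore 3-connected. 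Since $G$ is 3-connected, the degree-2 vertices of $G \setminus e$ are precisely the endvertices of $e$ that had degree 3 in $G$, so at least one of $u, v$ has degree 3 (otherwise no suppression occurs, $X'' = X$ stays disconnected, contradicting essentiality). Essentiality then forces $X''$ to be connected in $G''$ for every valid choice of suppression, since all such choices produce the same simple graph $G''$.

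Assume first that only $u$ has degree 3, with other edges $f_1 = ua$ and $f_2 = ub$. Each of the two suppression choices (contract $f_1$ or contract $f_2$) must produce a connected $X''$. If neither $f_i$ lies in $X$, suppression leaves $X$ unchanged and disconnected, a contradiction. If both lie in $X$, they share a component $C_1$, and contracting one of them leaves $C_1 \setminus \{f_i\}$ as a nontrivial connected subgraph still disjoint from $C_2$ in $X''$, a contradiction. Hence exactly one, say $f_1 \in X$, and then contracting $f_1$ forces $C_1 = \{f_1\}$ (else $C_1 \setminus \{f_1\}$ persists as a separate component of $G''[X'']$), while contracting $f_2$ forces the merged vertex $\{u,b\}$ to be common to $C_1$ and $C_2$, i.e.\ $b \in C_2$. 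Together these yield conclusion 2.

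If both $u, v$ have degree 3, with other edges $f_1, f_2$ and $g_1, g_2$, the four suppression choices $(\alpha, \beta)$ with $\alpha \in \{f_1, f_2\}$ and $\beta \in \{g_1, g_2\}$ all give the same 3-connected $G''$ and must each produce connected $X''$. Configurations with zero $X$-edges among $f_1, f_2, g_1, g_2$, or with two $X$-edges on the same endvertex, are contradictory by the argument above. A configuration with exactly one $X$-edge (say $f_1 \in X$, the rest not) reduces to conclusion 2 at $u$ by that same argument restricted to the $u$-side. The remaining configuration has one $X$-edge at each endvertex, $f_1, g_1 \in X$ WLOG. If $f_1$ and $g_1$ lie in distinct components of $G[X]$, then $u \in C_1$ and $v \in C_2$, so $e$ joins the components, which is the excluded conclusion 1. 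Otherwise $f_1, g_1$ share a component $C_1$; the choice contracting both then forces $C_1 = \{f_1, g_1\}$, and the connectedness of the two-edge subgraph $C_1$ together with $G$ being simple and $e$ the unique $uv$-edge forces the far endpoints of $f_1$ and $g_1$ to coincide (so $\{e, f_1, g_1\}$ forms a triangle). The other two choices then force the far endpoints $b$ of $f_2$ and $d$ of $g_2$ to lie in $C_2$, so $f_2$ and $g_2$ each join the two components. This is conclusion 3. The main obstacle is the uniform bookkeeping of the four suppression choices in the last case and the extraction of the triangle condition from the interplay between connectedness of the two-edge component and the simple-graph assumption.
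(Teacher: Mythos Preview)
Your proof is correct and follows essentially the same approach as the paper: both assume conclusion~1 fails, use Bixby's Lemma to deduce that the cosimplification of $G\setminus e$ is 3-connected, and then exploit the fact that different suppression choices yield the same 3-connected graph but possibly different $X''$, so essentiality forces $X''$ to be connected for every choice. The paper organises this slightly differently---it first fixes the canonical choice (contract the $X$-edge in each mixed serial class) to deduce that an entire component $C$ of $G[X]$ gets contracted, hence $|C|\le 2$, and only then switches to the alternative choice to pin down the joining edge---whereas you run a direct case analysis on which of the two to four edges at the degree-3 endvertices lie in $X$. Both routes arrive at conclusions~2 and~3 by the same mechanism.
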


\begin{proof}
We assume that $e$ does not join the two components of $G[X]$, in particular $G$ is not $K_4$.
If the simple graph $G'=G/e$ is 3-connected, then $(G', X\cap E(G'))$ is a 3-connected constraint 
minor such that $X\cap E(G')$ is disconnected. 
Since $e$ is essential this is impossible. 
Hence by Bixby's Lemma  \cite{oxley2} the graph $G\sm e$ is 3-connected after 
suppressing edges 
of degree 2. 
Let $G'$ be the graph obtained from $G\sm e$ by contracting all but one edge from every serial 
class.

By construction, any vertex of degree 2 of $G\sm e$ must be an endvertex of $e$. 
Hence every nontrivial serial class has size two and there are at most two of them. 
If a serial class contains an edge in $X$ and an edge not in $X$, we contract the edge of $X$ in 
the construction of $G'$.
This construction ensures that we never contract all edges of a path in $G$ that connects two 
components of $G[X]$.
We let $X'=X\cap E(G')$. 
Hence the components of $G'[X']$ come from those components of $G[X]$ such that not all their edges 
got contracted.
Since $G'$ is 3-connected and $e$ is essential, the graph $G'[X']$ is connected.

Hence there must be a component $C$ of $G[X]$ such that all its edges got contracted. Hence 
$C$ has 
at most two edges. 
We split into two cases.

\paragraph{Case 1: $C$ has only a single edge $f$.}

Then $e$ has an endvertex $v$ of degree 3 that is incident with $f$. 
In this case we shall show that we have outcome 2; that is, the third edge $g$ incident with $v$ 
joins the two 
components of $G[X]$. Indeed, we construct $G''$ like $G'$ but instead of $f$ we contract $g$. 
Since $G''$ is isomorphic to $G'$, it is 3-connected. As $e$ is essential, it must be that 
$G''[X'+f]$ is connected.
Since the component of $G[X]$ different from $C$ does not contain a vertex incident with $e$, the 
edge $g$ joins the two components of $G[X]$. 

\paragraph{Case 2: $C$ has two edges $f_1$ and $f_2$.}

Then $e$ has two endvertices $v_1$ and $v_2$ of degree three such that $v_i$ is incident with 
$f_i$. Since $G$ is a simple graph and $C$ is connected,
the three edges $e$, $f_1$ and $f_2$ form a triangle. Similar as in Case 1 we prove for each $i$ 
that the third 
edge incident with $v_i$ joins the two components of $G[X]$. So we have outcome 3 in this case. 
\end{proof}

The following lemma deals with outcome 2 of \autoref{not_X_essential}. 

\begin{lem}\label{situation2}
Let $G$ be a 3-connected graph and $X$ a disconnected edge set of $G$.
Assume that every edge not in $X$ is essential.
Assume that a component $C$ of $G[X]$ consists only of a single edge and that there is an edge $vw$ 
such that $v$ is a vertex of $C$ and $w$ is not in $G[X]$. 
Then $(G,X)$ is the constraint wheel. 
\end{lem}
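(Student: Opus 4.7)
The plan is to apply \autoref{situation3} to force $G[X]$ to have exactly two components, and then to invoke \autoref{not_X_essential} repeatedly on essential non-$X$ edges near $w$ in order to pin down every vertex and edge of $G$ until it is forced to be the wheel $W_4$ with two opposite rim edges forming $X$.

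First I would note that \autoref{situation3} applies and gives either that $G[X]$ has exactly two components or that $(G,X)$ is the weird prism; the weird prism is excluded because all of its vertices lie in $V(G[X])$, contradicting the existence of the vertex $w$. Writing $C = \{vu\}$ for the single-edge component and $C'$ for the other, I would apply \autoref{not_X_essential} to the edge $vw$: outcomes 1 and 3 both force $w$ to be incident with an $X$-edge and therefore fail, so outcome 2 holds and yields that $v$ has degree $3$ with $N(v) = \{u, w, v'\}$ for some $v' \in V(C')$.

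Next I would show $C'$ is also a single edge by analysing an arbitrary edge $wx$ incident with $w$: \autoref{not_X_essential} again forces outcome 2, so $x$ is a degree-$3$ vertex of $V(G[X])$ incident with a single-edge component of $G[X]$. If $C'$ had more than one edge then every such $x$ would lie in $\{v,u\}$, contradicting that $w$ has degree at least $3$. Hence $C' = \{v' v''\}$ and $V(G[X]) = \{v, u, v', v''\}$. A case analysis on $u$ then shows that $u$ is adjacent to $w$ and has degree $3$: if $u$ had no neighbour outside $V(G[X])$, then $N(u) = \{v,v',v''\}$ and the preceding analysis applied to $w$ would force $w \sim v'$, giving $v'$ degree at least $4$ and contradicting outcome 2 applied to the edge $v'w$; while if $u$'s outside neighbour $y$ were distinct from $w$, applying the same analysis to $y$ would cap $|N(w)| \le 2$, again impossible. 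So, up to relabelling $v'$ and $v''$, one has $N(u) = \{v, w, v''\}$, and the same reasoning applied to $v'$ and to $v''$ fixes $N(v') = \{v, v'', w\}$ and $N(v'') = \{u, v', w\}$.

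Finally I would observe that any putative further vertex $y$ would lie outside $V(G[X])$ with all candidate neighbours already saturated in their degree-$3$ neighbourhoods, forcing $y$ to be isolated. Hence $V(G) = \{v, u, v', v'', w\}$ with precisely the edges of the wheel $W_4$ with hub $w$ and rim $v\, v'\, v''\, u$, and $X$ equal to the two opposite rim edges $vu$ and $v'v''$; this is the constraint wheel. I expect the main technical obstacle to be the case analysis around $u$: ensuring that the outside neighbour of $u$ is forced to be $w$ itself rather than a distinct new vertex requires a careful bookkeeping of $w$'s remaining candidate neighbours after the other adjacencies have been locked down.
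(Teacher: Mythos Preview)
Your proof is correct. The overall strategy---invoke \autoref{situation3} to reduce to two components, rule out the weird prism via the vertex $w$, and then repeatedly feed non-$X$ edges through \autoref{not_X_essential} to force outcome~2---matches the paper. One small imprecision: once $v'$ has been fixed as the neighbour of $v$ in $C'$, the equality $N(u)=\{v,w,v''\}$ is actually forced rather than obtained ``up to relabelling''; but this does not affect the argument.

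Where you diverge from the paper is in the endgame. After showing that both components are single edges, the paper bounds the vertex set in one stroke by a separator argument: each vertex of $C\cup C'$ adjacent to something outside has degree three and a unique outside neighbour, and since $w$ already absorbs at least three of these, the set $W$ of vertices of $C\cup C'$ with a neighbour outside $(C\cup C')+w$ has size at most one, so $W\cup\{w\}$ would be a $2$-separator if any further vertex existed. It then finishes by a short degree count forcing $G[C\cup C']$ to be a $4$-cycle with $w$ adjacent to all four rim vertices. Your approach instead determines $N(v),N(u),N(v'),N(v'')$ one at a time via case analysis and only then rules out extra vertices. Both routes work; the paper's separator-plus-counting argument is shorter and more conceptual, while yours is more explicit and makes the bookkeeping visible---at the cost of the extra case distinctions you flagged around $u$.
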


\begin{proof}
 The constraint graph $(G,X)$ is not the weird prism; indeed the weird prisms has no edge $vw$ as 
required in the assumptions. Hence by \autoref{situation3}, 
$G[X]$ has only one connected component $C'$ aside from $C$.
The endvertex $w$ of $e$ that is not in $C$ is not incident with any edge of $X$. 
Since $G$ is 3-connected, $w$ is incident with at least two edges $f_1$ and $f_2$ aside from $e$. 
By \autoref{not_X_essential} the endvertex of each $f_i$ different from $w$ must be in $C$ or $C'$. 
Since $C$ has only one vertex aside from $v$, one of the $f_i$ must have an endvertex in $C'$. By 
symmetry, we may assume that this is true for $f_1$. 
Since $f_1$ has an endvertex that is in neither $C$ nor $C'$, we can apply 
\autoref{not_X_essential} to deduce that $C'$ also consists of a single edge. 

\begin{sublem}\label{xyz}
 The vertex set of $G$ is $(C\cup C')+w$.
\end{sublem}

\begin{proof}
By \autoref{not_X_essential}, each vertex of $C\cup C'$ that has a neighbour outside that set has 
degree three and at most one neighbour outside that set. 
Let $W$ be the set of vertices of  $C\cup C'$ that have a neighbour outside the set $(C\cup 
C')+w$. Since $w$ has at least three neighbours in 
$C\cup C'$, the set $W$ contains at most one vertex.
The set $W$ together with $w$ separates $G$ if there are vertices not in $(C\cup C')+w$.
Since $G$ is 3-connected, this is not true. Hence $(C\cup C')+w$ is the vertex set of $G$.
\end{proof}

Since $w$ is adjacent to at least three vertices in $C\cup C'$, at least three vertices of $C\cup 
C'$ have precisely two neighbours in $C\cup C'$. Hence the graph
$G[C\cup C']$ is a 4-cycle. Since $G$ is 3-connected, each of its vertices has degree at least 
three. Hence by 3-connectivity every vertex of $C\cup C'$ is adjacent to $w$.
Thus $G$ is the constraint wheel.
\end{proof}

Given an edge set $Z$, by $V(Z)$ we denote the set of endvertices of edges in $Z$. 
Summing up, we have the following.

\begin{lem}\label{summary}
 Let $(G,X)$ be a 3-connected constraint graph such that $X$ is disconnected.
 Assume that every edge not in $X$ is essential and that $(G,X)$ is neither the constraint wheel 
nor the weird prism.
 Then $G[X]$ has precisely two connected components $C_1$ and $C_2$. All edges not in $X$ have 
both their endvertices in $V(X)$. 
\end{lem}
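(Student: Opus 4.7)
The statement splits into two claims: (i) $G[X]$ has exactly two connected components $C_1, C_2$, and (ii) every edge of $G$ outside $X$ has both endvertices in $V(X)$. My plan is to reduce each claim to a lemma already proved in this section, so the argument should be short.

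Claim (i) follows immediately from \autoref{situation3}: the constraint graph $(G,X)$ satisfies all of its hypotheses (it is 3-connected, not constraint connected, and every edge outside $X$ is essential) and by assumption $(G,X)$ is not the weird prism, so $G[X]$ must consist of exactly two components $C_1$ and $C_2$.

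For claim (ii), I would fix an arbitrary edge $e\in E(G)\sm X$ and apply \autoref{not_X_essential} to it; the hypotheses hold thanks to claim (i) and essentiality. Outcome (1) gives that $e$ joins $C_1$ to $C_2$, so both endvertices of $e$ lie in $V(X)$ at once. Outcome (3) produces a triangle of $e$ with the two edges of a size-two component $C$, and again both endvertices of $e$ are vertices of $C\se V(X)$. The only case requiring further work is outcome (2), where one endvertex $v$ of $e$ is a degree-three vertex incident with a single-edge component $C$, while the other endvertex $w$ is a priori unconstrained.

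To rule out $w\notin V(X)$ in outcome (2), I would invoke \autoref{situation2}: the triple $(C,v,w)$ would satisfy its hypotheses (single-edge component $C$, edge $vw$ with $v\in V(C)$ and $w$ not a vertex of $G[X]$), so the lemma would force $(G,X)$ to be the constraint wheel, contradicting the hypothesis of \autoref{summary}. Hence $w\in V(X)$, finishing claim (ii). I do not expect a real obstacle here; all the structural work has been absorbed into \autoref{situation3}, \autoref{not_X_essential} and \autoref{situation2}, and the task is just to trace through the three outcomes and invoke the right exclusion for the only delicate one.
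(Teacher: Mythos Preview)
Your proposal is correct and matches the paper's own argument exactly: the paper's proof simply cites \autoref{situation3} for the two-component claim and then invokes \autoref{not_X_essential} together with \autoref{situation2} for the endvertex claim, which is precisely the case split you spell out. Your elaboration of how outcome (2) is handled via \autoref{situation2} is the intended reading of the paper's terse citation.
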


\begin{proof}By assumption and by \autoref{situation3}, $G[X]$ has precisely two 
connected components,  $C_1$ and $C_2$. By \autoref{not_X_essential} and \autoref{situation2}, 
every edge not in $X$ has both its endvertices in $V(X)$. 
 
\end{proof}

\section{Contracting edges in the constraint}

In this section we prove \autoref{main_intro}. 

First we need some preparation. 
Given a bond $d$ in a graph $G$, then $G-d$ has two connected components which we 
call the 
\emph{sides of} $d$. If we want to specify them, we call them the \emph{left side} and the 
\emph{right side}.

Given a graph $G$ and a bond $d$ of $G$, we say that $G$ is \emph{3-connected along $d$} if 
$G$ is 2-connected and there does not exist a separator consisting of two vertices from either  
side of $d$.

For the rest of this section we fix a graph $Q$ and a bond $d$ of $Q$ so that $Q$ is 3-connected 
along $d$.
We denote the set of edges on the left side of $d$ by $L$, and the set of edges on the right side 
of 
$d$ by $R$.
We assume throughout that $L$ and $R$ are nonempty. 
A \emph{special contraction minor} of $(Q,d)$ is a pair $(Q',d')$, where $Q'$ is obtained from $Q$ 
by contracting edges not in $d$, and $d'=d\cap E(Q')$.
Note that $d'$ and $d$ need not be equal as contractions might force us to delete edges in 
parallel classes. Since any parallel class 
containing one edge of $d$ is a subset of $d$, the set $d'$ is independent of the choice of the 
deleted edges.

\begin{eg}The following pairs $(Q,d)$ will be of particular interest in this paper.
For any two bonds of $K_4$ with both sides nonempty, there is an isomorphism of $K_4$ that induces 
a bijection between these 
two bonds.
The \emph{special $K_4$} is the pair consisting of the graph $K_4$ and a bond of size 4.
The \emph{special prism} is the pair consisting of the prism  and a bond whose complement consists 
of the two triangles of the prism, see \autoref{fig:special}.

\begin{figure}[htbp]
	\centering
	\subfloat[the special $K_4$\label{fig:special_k4}]{\includegraphics{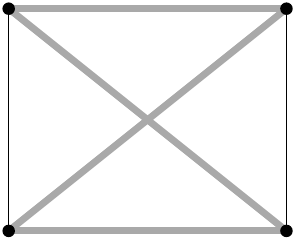}}
	\hspace{1cm}
	\subfloat[the special prism \label{fig:special_prism}]{\includegraphics{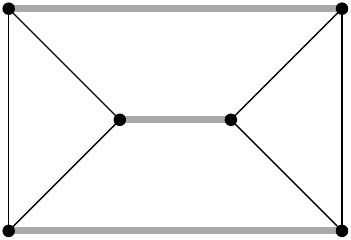}}
	\caption{The edges in the bond $d$ are coloured grey.}
	\label{fig:special}
\end{figure}
\end{eg}

Our aim in this section is to prove the following.

\begin{lem}\label{minor_char_H}
 Let $Q$ be a graph 3-connected along a bond $d$ such that the two sides of $d$ contain edges.
Then $(Q,d)$ has a special contraction minor that is the special $K_4$ or the special prism.
\end{lem}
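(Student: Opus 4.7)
The plan is to proceed by induction on $|V(Q)|$, with the special $K_4$ (on $4$ vertices) and the special prism (on $6$ vertices) as the terminal cases of the induction. At each step, either $(Q,d)$ is one of these terminal cases and we return it, or we invoke a key sub-lemma to find an edge $e\in L\cup R$ whose contraction $(Q/e,\,d\cap E(Q/e))$ still satisfies the hypotheses; the induction hypothesis applied to the smaller pair then yields the desired special contraction minor.

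For the base case $|V(Q)|=4$, both sides of $d$ must contain exactly two vertices, so $L$ and $R$ are each a single edge. The ``$3$-connected along $d$'' condition forbids the relevant $2$-separators across the sides of $d$; checking each of the four cross pairs in turn forces every cross edge to be present in $Q$, whence $Q=K_4$ with $d$ the standard $4$-edge bond, namely the special $K_4$.

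The main obstacle is the sub-lemma: when $(Q,d)$ is neither the special $K_4$ nor the special prism, produce a contractible edge. Suppose for contradiction no edge of $L\cup R$ admits such a contraction, and fix $e=uv\in L$ (the $R$-case being symmetric). Non-contractibility of $e$ means $Q/e$ carries a $2$-separator of the type forbidden by the ``$3$-connected along $d$'' condition. A standard lifting argument then shows that this $2$-separator must involve the contracted vertex $[uv]$: otherwise the same pair of vertices would already witness the same forbidden $2$-separation in $Q$, contradicting the hypothesis on $(Q,d)$. The lift thus produces a $3$-separator $\{u,v,w\}$ of $Q$ with $w$ on the side of $d$ opposite to $\{u,v\}$, and symmetry attaches such a $3$-separator to every edge of $L$ and every edge of $R$.

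The hardest part will be unpacking this rigid web of $3$-separators. I expect the analysis to force each side of $d$ to span at most three vertices --- so each side is either a single edge or a triangle --- and further to constrain the bipartite pattern of $d$-edges between the two sides to be either a complete bipartite $K_{2,2}$ or a perfect matching on three pairs. These two alternatives identify $(Q,d)$ as the special $K_4$ or the special prism respectively, contradicting the assumption that $(Q,d)$ is neither, and thereby completing the sub-lemma and hence the induction. The delicate step is using the $3$-separator witnesses to preclude any side of $d$ spanning four or more vertices; here one must combine the witnesses associated with adjacent edges of $G[L]$ (resp.\ $G[R]$) to rule out longer paths or larger cyclic structures inside a single side.
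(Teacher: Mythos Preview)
Your proposal is a plausible outline, but it has two genuine gaps, and the hardest part is not actually argued.

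\textbf{First gap: the case analysis for non-contractibility is incomplete.} You write that ``non-contractibility of $e$ means $Q/e$ carries a $2$-separator of the type forbidden by the $3$-connected-along-$d$ condition,'' and then lift this to a $3$-separator $\{u,v,w\}$ with $w$ on the \emph{opposite} side. But ``$3$-connected along $d$'' only forbids \emph{mixed} $2$-separators (one vertex from each side); it separately requires $2$-connectedness and that both sides of the bond retain edges. So $(Q/e,d')$ can fail for reasons other than a mixed $2$-separator:
\begin{enumerate}
\item[(i)] $\{u,v\}$ may itself be a (same-side) $2$-separator of $Q$, in which case $[uv]$ is a cutvertex of $Q/e$. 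The lift then gives a $2$-separator entirely on one side, \emph{not} your $\{u,v,w\}$ with $w$ opposite. This genuinely occurs: same-side $2$-separators are allowed in $Q$, and even when $e$ is contractible inside $Q[L]$ one must still check that $Q[R]$ has a neighbour in $Q[L]-u-v$.
\item[(ii)] The side containing $e$ may be a single edge, so it becomes empty after contraction.
\end{enumerate}
Case (ii) is easily routed around, but case (i) is substantial and not addressed.

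\textbf{Second gap: the ``delicate step'' is not done.} You say you ``expect'' the web of $3$-separators to force each side to span at most three vertices, but give no argument. This is the heart of the lemma, and it is not clear how the mixed $3$-separator witnesses alone (even if they existed for every edge, which by the first gap they need not) would bound the side sizes.

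For comparison, the paper takes a rather different route. It does not insist on single-edge contractions; it allows multi-edge special contraction minors and argues structurally about each side. If $Q[L]$ is not $2$-connected, it contracts the complement of a leaf block of $Q[L]$ together with suitable blocks of $Q[R]$ (Lemma~\ref{not_2_con}). If $Q[L]$ is $2$-connected but not a triangle, it exploits the existence of several contractible edges of $Q[L]$ that do not all share an endvertex (Lemmas~\ref{tutte_lem} and~\ref{both_2_con}). This reduces any irreducible $(Q,d)$ to the four cases in which each side is a single edge or a triangle, which are then handled directly (Lemmas~\ref{edge_case}--\ref{case_ana}). In particular, the paper never needs the per-edge $3$-separator framework you propose.
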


\begin{proof}[Proof that \autoref{minor_char_H} implies \autoref{main_intro}.]
Let $(G,X)$ be a 3-connected constraint graph such that $X$ is disconnected. Our aim is to show 
that $(G,X)$ has the constraint $K_4$, the constraint wheel or a constraint prism as a 
constraint minor. By picking $(G,X)$ minimal, we may assume that every edge not in $X$ is 
essential. By \autoref{bad_prims} we may assume that $(G,X)$ is not the weird prism. We may also 
assume that it is not the constraint wheel. Thus by \autoref{summary}, $G[X]$ has precisely two 
connected components $C_1$ and $C_2$. And all edges not in $X$ have 
both their endvertices in $V(X)$. We take $Q=G$ and $d$ to be the bond consisting of those edges 
with one endvertex in $C_1$ and the other in $C_2$. Note that each $C_i$ contains at least one 
edge. Since $Q$ is 3-connected, $(Q,d)$ is 3-connected along $d$. 

By \autoref{minor_char_H}, $(Q,d)$ has a special contraction minor $(Q',d')$ that is the special 
$K_4$ or the special prism. Put another way, we can contract edges not in $d$ such that $G$ 
is $K_4$ or 
the prism. Let $X'=X\cap E(Q')$. 
We recall that if contractions force us to delete edges from a parallel class 
we first delete edges not in $X$. Hence since $X$ spans the two sides of $d$ in $G$, also $X'$ 
spans the two sides of $d'$ in $Q'$. Thus if $(Q',d')$ is a special $K_4$, then $(G,X)$ has the 
constraint $K_4$ as a constraint minor. Otherwise  $(Q',d')$ is the special prism. It is 
straightforward to check that in this case $(G,X)$ has a 
constraint prism as a constraint minor.
\end{proof}

The rest of this section is dedicated to the proof of \autoref{minor_char_H}. 
A pair $(Q,d)$ is \emph{irreducible} if $Q$ is 3-connected along $d$ but 
there does not exist a proper\footnote{non-identical} special contraction minor $(Q',d')$ such that 
both sides of $d'$ contain edges and $Q'$ is 3-connected along $d'$. 
The first step in the proof of \autoref{minor_char_H} will be to show that the set of irreducible 
 $(Q,d)$ is bounded. Later we examine this bounded set. 

Given an edge set $Z$ of $Q$, by $Q[Z]$ we denote the subgraph of $Q$ whose vertices are those with 
at least one endvertex in $Z$ and whose edges are those in $Z$.

\begin{lem}\label{not_2_con}
If the graph $Q[L]$ is not 2-connected and has at least two edges, then $(Q,d)$ is not irreducible.
\end{lem}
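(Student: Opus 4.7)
Here is the plan. Since $L$ is the edge set of one side of a bond in $Q$, the subgraph $Q[L]$ is connected, and having at least two edges in a simple graph, it has at least three vertices. Failing to be 2-connected, it has a cut vertex $v$. I will exhibit an edge $e \in L$ whose contraction $(Q', d') := (Q/e,\ d \cap E(Q/e))$ is a proper special contraction minor that is 3-connected along $d'$ with both sides nonempty, witnessing that $(Q,d)$ is not irreducible.

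To choose $e$, I take a leaf block $B$ of the block-cut tree of $Q[L]$, so that $v$ is the only cut vertex of $Q[L]$ lying in $V(B)$. If $B$ is a single edge $vu$ (\emph{Case 1}), set $e := vu$; otherwise $B$ is 2-connected on at least three vertices (\emph{Case 2}), and then let $e$ be any edge of $B$ with both endpoints in $V(B) \setminus \{v\}$, which exists because $B - v$ is connected and has at least two vertices. Routine verifications will then dispatch several conditions: $e$ is contracted so $(Q', d')$ is proper; the right-side edge set $R$ is untouched by the contraction, and the edges of $L$ in blocks of $Q[L]$ other than $B$ (which exist since $Q[L]$ has at least two blocks) persist in $Q'$, so both sides of $d'$ still contain edges; and $Q/e$ is 2-connected because $|V(Q)| \geq 5$.

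The real work is to show $Q'$ has no 2-cut $\{x,y\}$ lying wholly in one side of $d'$. A 2-cut on the right side will lift to a 2-cut of $Q$ on the right side (since contracting an edge of $L$ cannot introduce new right-side 2-cuts), contradicting 3-connectedness of $Q$ along $d$. For a 2-cut on the left side in which neither coordinate is the merged vertex $e^*$, the same lift applies. So the delicate possibility is $\{e^*, y\}$ being a 2-cut with $y$ a left-side vertex distinct from the endpoints $e_1, e_2$ of $e$; this translates to $\{e_1, e_2, y\}$ being a 3-cut of $Q$, and since $Q[R]$ is connected and $y$ is on the left, there will be a component $C$ of $Q - \{e_1, e_2, y\}$ disjoint from the right-side vertex set. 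Every vertex of $C$ will then be a left-side vertex with no $d$-edge, else $C$ would meet the untouched right side.

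From here I will extract a left-side 2-cut of $Q$ and reach the desired contradiction. In Case 1 this is immediate: $u$ is a leaf of $Q[L]$, so its only $L$-edge is $vu$, and in particular $u$ contributes no $L$-edge to $C$; hence $C$'s external $L$-edges go only to $\{v, y\}$, which---combined with the absence of $d$-edges from $C$---makes $\{v, y\}$ a 2-cut of $Q$ on the left side. In Case 2 the same style of argument iterates: if $C$ had no $L$-edge to $u_1$, then $\{u_2, y\}$ would be a left-side 2-cut of $Q$, and similarly $C$ must have $L$-edges to $u_2$ and to $y$. The main obstacle I anticipate is the sub-case $y = v$ of Case 2, where no coordinate of the 3-cut $\{u_1, u_2, v\}$ can be dropped to yield a left-side 2-cut directly; my plan there is to fall back on contracting an edge of the form $vw$ with $w \in C$ adjacent to $v$ in $B$ (such $w$ exists because $C$ has an $L$-edge to $v$), and to show that the analogous bad-case analysis for this new contraction again terminates in a left-side 2-cut of $Q$, completing the argument.
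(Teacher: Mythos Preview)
Your single-edge-contraction strategy has a genuine gap in Case~2, sub-case $y\neq v$. You establish that the left-only component $C$ sends $L$-edges to each of $u_1,u_2,y$, and then stop; but this is not a contradiction, and in fact your chosen contraction can fail. Take $Q[L]$ to be a $K_4$ on $\{v,u_1,u_2,c\}$ together with a pendant edge $vy$, take $Q[R]$ to be a single edge $r_1r_2$, and let the $d$-edges be $u_ir_j$ and $yr_j$ for all $i,j$ (so $v$ and $c$ have no $d$-edges). One checks directly that $Q$ is $3$-connected along $d$. Your rule ``any edge of $B$ with both endpoints off $v$'' permits $e=u_1u_2$; but then $Q-\{u_1,u_2,y\}$ splits into $\{v,c\}$ and $\{r_1,r_2\}$, so $\{e^*,y\}$ is a left-side $2$-cut of $Q/e$ with $y\neq v$. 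The component $C=\{v,c\}$ does send $L$-edges to all three of $u_1,u_2,y$, exactly as you predict, yet no $2$-subset separates. So the argument does not close, and your claim that \emph{any} such $e$ works is false. (In this example $e=u_1c$ happens to succeed, and so does the other leaf block $vy$; but your proof commits to an arbitrary choice.) Your plan for the sub-case $y=v$ --- switch to contracting a different edge $vw$ and rerun the analysis --- is likewise only a sketch, and the same kind of obstruction can recur. A minor further point: ``$Q/e$ is $2$-connected because $|V(Q)|\ge 5$'' is not the reason; the correct reason is that $\{e_1,e_2\}$ is a left-side pair and $Q$ has no left-side $2$-separator.

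The paper avoids these difficulties by contracting much more aggressively: it contracts \emph{all} edges of $Q[L]$ outside a chosen leaf block $b$ (so the new left side is exactly $b$), and then on the right it iteratively contracts any leaf block $b'$ of $Q[R]$ that has no edge to $b-v$. The point of the right-side contractions is precisely to kill the mixed situation that breaks your single-edge approach: after this clean-up, every surviving piece of the right side sees $b-v$, which is what drives the verification that the result is $3$-connected along $d$.
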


\begin{proof}
We consider the block-cutvertex-tree of $Q[L]$ and take a leaf block $b$. Recall that $b$ is a 
2-connected subgraph of $Q[L]$ or a single edge attached at a cutvertex $v\in b$ to the rest of 
$Q[L]$. We obtain $Q_1$ from $Q$ by contracting all edges of $Q[L]$ not in $b$. Since by assumption 
there is an edge in $Q[L]$ that is not in $b$, $Q_1$ is a nontrivial contraction of $Q$. 

Next we consider the block-cutvertex-tree of $Q[R]$. Note that unlike 
that for $Q[L]$ this may consist of just a single node. We obtain $Q_2$ from $Q_1$ by successively 
contracting leaf blocks $b'$ attached at a cutvertex $v'$ onto $v'$ if there is no edge 
between $b-v$ and $b'-v'$.  

In a slight abuse of notation we denote the contraction vertex of $Q_2$ containing $v$ by $v$. 
Similarly after contracting a leaf part on the right side, we denote the contraction vertex 
containing $v'$ by $v'$. 
We let $d_2=d\cap E(Q_2)$. We denote the edges on the left of $d_2$ by $L_2$ and the edges on the 
right of $d_2$ by $R_2$.

Our aim is to show that $Q_2$ is 3-connected along $d_2$.
By construction $L_2$ is nonempty.

\begin{sublem}\label{is_nonempty}
The edge set $R_2$ is nonempty. 
\end{sublem}
\begin{proof}
In the construction of $Q$ we only contract a leaf block $b'$ on the right side attached with 
cutvertex $v'$ if there is no edge between $b-v$ and $b'-v'$. In particular by contraction we never 
identify two vertices of $Q[R]$ that have neighbours in $b-v$. 

If there was only a single vertex $z$ in $Q[R]$ that has a neighbour in $b-v$, then $Q-v-z$ would 
be disconnected, contrary to our assumption that $Q$ is 3-connected along $d$. Hence there are at 
least two vertices in $Q[R]$ that have neighbours in $b-v$. Thus as explained above, the connected 
graph $Q_2[R_2]$ contains at least two vertices. Hence $R_2$ contains an edge. 
\end{proof}

\begin{sublem}\label{is_2con}
The graph $Q_2$ is 2-connected. 
\end{sublem}

\begin{proof}
Let $x$ be an arbitrary vertex of $Q_2$. We distinguish two cases.

{\bf Case 1: } $x=v$. 

By \autoref{is_nonempty}, the connected graph $Q_2[R_2]$ has a neighbour in the connected set 
$b-v$. Hence $Q_2-x$ is connected. 

{\bf Case 2: } $x\neq v$. If $x$ is not a contraction vertex, then $Q_2-x$ is connected as $Q-x$ is 
connected. So $x$ is a vertex of $Q_2[R_2]$. Let $K$ be a component of the graph $Q_2[R_2]-x$. Let 
$K'$ be the component of $Q[R]-x$ containing $K$. Since $Q-x$ is connected, there is an edge from 
$K'$ to $Q[L]$. Hence there is an edge 
from $K$ to $b$ in $Q_2$. Hence every component of the graph $Q_2[R_2]-x$ sends an edge to the 
connected set $b$. Hence  $Q_2-x$ is connected.  
\end{proof}

\begin{sublem}\label{is_3con}
For any two vertices $x\in Q_2[L_2]$ and $y\in Q_2[R_2]$ the graph $Q_2-x-y$ is connected.  
\end{sublem}

\begin{proof}
We distinguish two cases.

{\bf Case 1: } $x=v$.

Let $K$ be a component of the graph $Q_2[R_2]-y$. Since $K$ did not get contracted, it has a 
neighbour in $b-v$. Thus every component of  $Q_2[R_2]-y$ has a neighbour in the connected set 
$b-v$. Hence $Q_2-x-y$ is connected. 

{\bf Case 2: } $x\neq v$. 

Let $K$ be a component of the graph $Q_2[R_2]-y$. Let $K'$ be the component of $Q[R]-y$ containing 
$K$. Since $Q-x-y$ is connected, there is an edge from $K'$ to $Q[L]-x$. Hence there is an edge 
from $K$ to $b-x$ in $Q_2$. Hence every component of the graph $Q_2[R_2]-y$ sends an edge to the 
connected set $b-x$. Hence  $Q_2-x-y$ is connected. 
\end{proof}

By \autoref{is_2con} and \autoref{is_3con}, $Q_2$ is 3-connected along $d_2$. By construction $Q_2$ 
is obtained from $Q$ by contracting at least one edge. By \autoref{is_nonempty}, the edge sets 
$L_2$ and $R_2$ are nonempty. Hence $(Q_2,d_2)$ witnesses that $(Q,d)$ is not irreducible.

\end{proof}

\begin{lem}\label{both_2_con}
If the graph $Q[L]$ is 2-connected but not a triangle and the graph $Q[R]$ is 2-connected or 
consists of a single edge, then $(Q,d)$ is not irreducible. 
\end{lem}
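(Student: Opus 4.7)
The plan is to exhibit a proper special contraction minor of $(Q,d)$ that witnesses non-irreducibility. Because $Q$ is simple and $Q[L]$ is 2-connected yet not a triangle, $Q[L]$ has at least four vertices; this is the slack needed to contract an edge inside $L$.

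First I would select an edge $e = xy$ of $L$ to contract, using the standard fact that every 2-connected simple graph on at least four vertices admits an edge whose contraction (with deletion of one edge from every newly created parallel class) leaves the graph 2-connected. Applied to $Q[L]$ this furnishes a candidate edge. Let $Q_1$ be the simplification of $Q/e$, where in every parallel class we preferentially keep an edge of $d$, so that the image of $d$ remains a bond; set $d_1 = d \cap E(Q_1)$, with sides $L_1$ and $R_1$. Since $e \in L$, the right side $R_1 = R$ is nonempty, and $L_1$, being the edge set of the simplified $Q[L]/e$, is also nonempty. So $(Q_1, d_1)$ is a proper special contraction minor, and it remains to check that $Q_1$ is 3-connected along $d_1$. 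Its 2-connectivity follows from the 2-connectivity of $Q_1[L_1]$, the connectivity of $Q[R]$, and the fact that $Q$ being 3-connected along $d$ with both sides nonempty forces $|d|\ge 3$.

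The delicate part, and the main obstacle, is ruling out 2-separators of $Q_1$ both of whose vertices lie on the same side of $d_1$. Any such separator not involving the contraction vertex $[xy]$ is already a 2-separator of $Q$ on the same side of $d$, contradicting the hypothesis. So the only danger is a separator of the form $\{[xy], u\}$ lifting to a 3-cut $\{x, y, u\}$ of $Q$ with $u$ on the left side and with neither $\{x,u\}$ nor $\{y,u\}$ a 2-separator of $Q$. My plan is to use the structure of $Q[L]$ to choose $e$ avoiding this: for example, take $e$ to be an interior edge of the last ear in an ear decomposition of $Q[L]$, so that the 2-connectivity of the preceding subgraph excludes the offending $u$. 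If every candidate $e$ in $L$ fails this test, a structural analysis using the 2-connectivity of $Q[L]$ together with the fact that $\{x,u\}$ and $\{y,u\}$ are not 2-separators of $Q$ will force $Q[L]$ to collapse to a triangle, contradicting the hypothesis and completing the proof.
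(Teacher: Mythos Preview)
Your proposal rests on a misreading of ``3-connected along $d$''. In the paper that phrase means $Q$ is $2$-connected and has no $2$-separator consisting of \emph{one vertex from each side} of $d$ (see how it is used in Sublemmas~\ref{is_3con} and~\ref{sublem71} and in the proofs of Lemmas~\ref{edge_case} and~\ref{case_ana}). Separators with both vertices on the same side are \emph{allowed}. So after contracting $e=xy\in L$, the genuine danger is a separator $\{[xy],z\}$ with $z\in Q[R]$, lifting to a $3$-cut $\{x,y,z\}$ of $Q$ with $z$ on the right --- not the left-side $u$ you analyse. Your ear-decomposition idea cannot rescue this: which contractible edge of $Q[L]$ works depends on how the bond $d$ is distributed, not just on the internal structure of $Q[L]$. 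For instance, if $Q[L]$ is a $4$-cycle $v_1v_2v_3v_4$, $Q[R]$ is a single edge $z_1z_2$, and $d=\{v_1z_1,v_2z_1,v_3z_2,v_4z_2\}$, then contracting $v_1v_2$ fails (the pair $\{[v_1v_2],z_2\}$ separates) while contracting $v_2v_3$ succeeds; nothing in an ear decomposition of the $4$-cycle distinguishes these.

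The paper's argument is organised around this correctly. One fixes a contractible edge $vw$ of $Q[L]$ and first shows $Q/vw$ is $2$-connected (Sublemma~\ref{sublem7}); this is already non-trivial, since it may happen that every edge of $d$ meets $\{v,w\}$, in which case a \emph{different} contraction minor (collapsing $Q[L]-v-w$ to a point) is produced instead --- your one-line appeal to $|d|\ge 3$ does not cover this. Sublemma~\ref{sublem71} then says: if $Q/vw$ fails to be $3$-connected along $d'$, every $d$-edge out of $Q[L]-v-w$ lands at a single vertex $z$ of $Q[R]$ (here the hypothesis that $Q[R]$ is $2$-connected or a single edge is used to make $Q[R]-z$ connected). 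Finally, Lemma~\ref{tutte_lem} supplies \emph{three} contractible edges of $Q[L]$, two of them vertex-disjoint; applying Sublemma~\ref{sublem71} to all three and combining the resulting constraints on where $d$-edges can land forces a $2$-separator of $Q$ with one vertex on each side, a contradiction. The key missing idea in your proposal is this simultaneous use of several contractible edges to pin down the attachment pattern of $d$.
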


In the proof of \autoref{both_2_con} we shall use the following lemma.
An edge $e$ in a 2-connected graph $G$ is \emph{contractible} if $G/e$ is 2-connected.

\begin{lem}\label{tutte_lem}
 If $G$ is a 2-connected graph that is not a triangle, then it has four contractible edges, 
two of which do not share an endvertex.
\end{lem}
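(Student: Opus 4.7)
The plan is to induct on $|V(G)|$, using the following reformulation (immediate from the fact that contracting an edge of a connected graph preserves connectivity): in a 2-connected simple graph on at least four vertices, an edge $uv$ is contractible iff $\{u,v\}$ is not a 2-separator of $G$. The base case $|V(G)|=4$ amounts to inspecting $C_4$, $K_4-e$ and $K_4$, where every edge is contractible and two non-adjacent ones visibly exist. If $G$ is $3$-connected (with $|V(G)|\ge 5$), then every edge is contractible; for any fixed edge $uv$ the graph $G-u-v$ is connected on at least three vertices and hence contains an edge non-adjacent to $uv$, while the minimum degree bound $\delta(G)\ge 3$ supplies far more than four contractible edges in total.

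Otherwise pick a 2-separator $\{u,v\}$ for which the smaller side $A$ of $G-u-v$ is of minimum possible size, and let $B$ denote the other side (grouping components so that both sides are connected). Set $G_A=G[A\cup\{u,v\}]$ and $G_B=G[B\cup\{u,v\}]$, each extended by a virtual edge $uv$ if this edge is not already in $G$; a short argument shows both $G_A$ and $G_B$ are 2-connected. The technical heart of the reduction is the transfer claim: an edge $e$ of $G_A$ other than $uv$ is contractible in $G_A$ iff it is contractible in $G$ (and symmetrically for $G_B$), while the edge $uv$ of $G$ (if present) is never contractible in $G$ since $\{u,v\}$ separates $G$. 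This follows from a short case split on whether the endpoints of $e$ meet $\{u,v\}$.

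When $|A|\ge 2$ we have $|B|\ge|A|\ge 2$, so both $G_A$ and $G_B$ are 2-connected simple graphs on at least four vertices and hence not triangles; induction applied to each yields four contractible edges containing a non-adjacent pair, and after discarding the virtual edge $uv$ the remaining edges are contractible in $G$. The non-adjacent pair transfers directly when $uv$ does not lie in it; otherwise the partner of $uv$ on the $A$-side has both endpoints in $A$, and it is non-adjacent to every contractible edge of $G_B$ other than $uv$. When $|A|=1$, so $A=\{a\}$ with $a$ a degree-two vertex adjacent to $u$ and $v$, we instead induct only on $G_B$ (which has at least four vertices since $|V(G)|\ge 5$) and observe directly that $ua$ and $va$ are contractible in $G$: namely $G-\{u,a\}=G_B-u$ and $G-\{v,a\}=G_B-v$, both connected by 2-connectedness of $G_B$. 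Combining $ua$, $va$ with the contractible edges inherited from $G_B$ yields four contractible edges and produces a non-adjacent pair, either from $G_B$'s inductive pair or by pairing $ua$ (or $va$) with a contractible edge of $G_B$ disjoint from $\{u,a\}$ (respectively $\{v,a\}$).

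The main obstacle is the careful bookkeeping around the virtual edge $uv$: it may or may not appear among the four contractible edges delivered by induction on either side, and may or may not belong to the claimed non-adjacent pair. The approach above handles these subcases uniformly by either transferring the inductive non-adjacent pair intact, or by combining one $A$-internal edge with one $B$-internal edge (or $\{u,v\}$-avoiding edge) into a new non-adjacent pair.
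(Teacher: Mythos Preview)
Your argument is correct. The induction on $|V(G)|$ with a split at a 2-separator goes through: the transfer claim (contractibility in $G_A$ agrees with contractibility in $G$ for every edge other than $uv$) holds by the short case analysis you indicate, and in each branch you do obtain at least four contractible edges of $G$ together with a non-adjacent pair. The minimality of $|A|$ is not actually needed --- any 2-separation with $|A|\le|B|$ suffices --- and in the $|A|=1$ case the required ``contractible edge of $G_B$ disjoint from $\{u,a\}$'' is supplied concretely by the inductive partner $e'$ of $uv$, whose endpoints both lie in $B$.

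That said, your route is genuinely different from the paper's. The paper does not induct; it invokes the Tutte decomposition of $G$ directly: if $G$ is 3-connected or a cycle of length at least four then every edge is contractible, and otherwise the decomposition tree has at least two leaves. For each leaf part one picks a vertex not in its attaching 2-separator; every edge incident with such a vertex is contractible in $G$, giving two edges per leaf and hence four contractible edges, with edges from distinct leaves automatically non-adjacent. This yields the result in a few lines, at the cost of citing the Tutte decomposition as a black box. Your argument is longer and carries more case bookkeeping (especially around the virtual edge $uv$), but it is entirely self-contained and essentially reproves, by hand, the piece of Tutte's structure theorem that is needed here.
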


\begin{proof}
 If $G$ is 3-connected or a cycle of length at least 4, every edge is contractible and the lemma is 
true in this case. Hence the Tutte-decomposition \cite{TutteGrTh} of $G$ has at least two 
leaf parts. The 
torsos of these parts are cycles or 3-connected. 
Let $v$ be a vertex in a leaf part that is not in the separator. Then any edge incident with $v$ is 
contractible. Since there are at least two leaf parts, we can pick vertices $v$ in one of each. 
Each such vertex is incident with at least two edges and no edge is incident with both these 
vertices. So there are at least four contractible edges, and there are two of them that do 
not share an 
endvertex.
\end{proof}

\begin{proof}[Proof of \autoref{both_2_con}.] 
  Suppose for a contradiction that  $(Q,d)$ is irreducible. 
 Let $vw$ be a contractible edge of $Q[L]$ (which exists by \autoref{tutte_lem}).

\begin{sublem}\label{sublem7}
 $Q/vw$ is 2-connected.
\end{sublem}
\begin{proof}
 As $Q$ is 2-connected and $Q/vw$ is a contraction, it suffices to show that $Q-v-w$ is connected.
 Since $vw$ is a contractible edge of $Q[L]$, the set $Q[L]-v-w$ is connected. So either $Q-v-w$ is 
connected or else the connected set $Q[R]$ can only have $v$ or $w$ as neighbours in $Q[L]$. 

Hence we may assume that we have the second outcome. 
Our aim is to derive a contradiction in that case. More precisely, we show that $(Q,d)$ is not 
irreducible. 
We obtain $\hat Q$ from $Q$ by contracting a 
spanning tree of $Q[L]-v-w$ and an edge from that set to one of $v$ or $w$. Note that $\hat Q$ is 
isomorphic to the graph obtained from $Q$ by deleting $Q[L]-v-w$. In our notation we suppress this 
bijection and just say things like `$v$ and $w$ are vertices of $\hat Q$'. 

Our aim is to show that $\hat Q$ is 3-connected along $d$. Suppose not for a contradiction. Then 
there is a separating set $S$ witnessing that. Let $a$ and $b$ be two vertices in different 
components of $\hat Q-S$. Let $P$ be a path in $Q-S$ joining $a$ and $b$. If $P$ contains a vertex 
of $Q[L]-v-w$, we can shortcut it by the edge $vw$. Hence we may assume that $P$ contains no vertex 
of  $Q[L]-v-w$. So $P$ is a path in $\hat Q-S$. This is a contradiction to the assumption that $a$ 
and $b$ are separated by $S$. Hence $\hat Q$ is 3-connected along $d$. As both sides of $d$ in 
$\hat Q$ contain edges, $(\hat Q,d)$ witnesses that $(Q,d)$ is not irreducible. This is the desired 
contradiction. 
\end{proof}

We abbreviate $Q'=Q/vw$.
Let $d'=d\cap E(Q')$. Let $L'$ be the left side of $d'$. The right side of $d'$ is $R$. 

\begin{sublem}\label{sublem71}
If $Q'$ is not 3-connected along $d'$, there is a vertex $z$ of $Q[R]$ such that $Q[L]-v-w$ can 
only have $z$ as a neighbour in $Q[R]$. 
\end{sublem}

\begin{proof}
By \autoref{sublem7}, there are vertices $y$ of $Q'[L']$ and $z$ of $Q'[R]$ such that $Q'-y-z$ is 
disconnected. Since $Q$ is 3-connected along $d$ and $Q'$ is a contraction of $Q$, it must be that 
$y$ or $z$ is a contraction vertex. Hence $y$ is the vertex $vw$. Hence $Q-v-w-z$ is disconnected.
Since $vw$ is contractible, $Q[L]-v-w$ is connected. By assumption $Q[R]-z$ is connected. So  
$Q[L]-v-w$ 
has no neighbour in $Q[R]-z$. 
\end{proof}
 
  By \autoref{tutte_lem}, $Q[L]$ has three contractible edges $a_1a_2$, $b_1b_2$ and $c_1c_2$ such 
that $a_1$, 
$a_2$, $b_1$ 
and $b_2$ are distinct vertices. Applying  \autoref{sublem71} to $a_1a_2$ and $b_1b_2$ yields that 
there are 
at most two vertices of $Q[R]$ that have neighbours in $Q[L]$. There have to be two such vertices 
as $Q$ is 2-connected. Call these vertices $z_1$ and $z_2$. \autoref{sublem71} gives the further 
information that one of them, say $z_1$, can only be incident to $a_1$ or $a_2$ and $z_2$ can only 
be 
to $b_1$ or $b_2$. 
 Now we apply  \autoref{sublem71} to $c_1c_2$. Since $c_1c_2$ is distinct from $a_1a_2$ and 
$b_1b_2$, there have to 
be vertices on these edges not in $c_1c_2$. By symmetry, we may assume that $a_1$ and $b_1$ are not 
in 
$c_1c_2$. Applying  \autoref{sublem71} to $c_1c_2$ yields that there is a single $z_i$ such that 
$a_1$ and 
$b_1$ can only have $z_i$ as a neighbour in $Q[R]$. By symmetry, we may assume that $z_i$ is 
equal to $z_1$. 
Hence $z_2$ can only have the neighbour $b_2$ in $Q[L]$. Hence $Q-z_1-b_2$ is disconnected. This is 
a 
contradiction to the assumption that $Q$ is 3-connected along $d$. Thus $(Q,d)$ is not irreducible. 
\end{proof}

\begin{lem}\label{edge_case}
If both graphs $Q[L]$ and $Q[R]$ consist of a single edge, then $(Q,d)$ is the special $K_4$.
\end{lem}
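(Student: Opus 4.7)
The plan is to show that the hypotheses force $Q$ to have exactly four vertices and then that the ``3-connected along $d$'' condition forces all four possible cross edges to be present, so that $Q=K_4$ and $d$ is the $4$-edge bond.

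First I would pin down the vertex set. Since $d$ is a bond of $Q$, the graph $Q-d$ has exactly two connected components, namely the two sides of $d$. Each side is a connected induced subgraph of $Q-d$, so every vertex on the left side must be joined by an edge of $L$ to some other vertex on the left side (an isolated vertex would form its own component, contradicting the bond structure). With $Q[L]$ a single edge $v_1v_2$, this forces the left side to consist of exactly $\{v_1,v_2\}$. The same reasoning applied to $R$ yields a right side $\{v_3,v_4\}$ joined by the edge $v_3v_4$. Thus $V(Q)=\{v_1,v_2,v_3,v_4\}$ and the edge set of $Q$ consists of $v_1v_2$, $v_3v_4$, and $d$, where $d$ is a subset of the four possible cross edges (since $Q$ is simple).

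Next I would invoke ``$Q$ is 3-connected along $d$'', which, as the use in \autoref{is_3con} confirms, means that for every $x$ on the left side and every $y$ on the right side, $Q-x-y$ is connected. Applying this to each of the four pairs $(x,y)\in\{v_1,v_2\}\times\{v_3,v_4\}$: the graph $Q-x-y$ has exactly two remaining vertices, one on each side, and is connected if and only if the edge between them lies in $Q$. This forces each of the four cross edges into $d$, so $|d|=4$ and $Q\cong K_4$. Hence $(Q,d)$ is (up to isomorphism) the special $K_4$.

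The argument is essentially just unpacking definitions, so I do not anticipate a genuine obstacle; the one point that deserves care is checking that the sides of $d$ cannot contain extra vertices beyond the endpoints of $Q[L]$ and $Q[R]$, which follows immediately from the fact that each side is a connected component of $Q-d$ rather than an arbitrary vertex set.
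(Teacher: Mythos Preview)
Your proof is correct and follows essentially the same argument as the paper's: first pin down that $Q$ has exactly four vertices (two on each side), then use the fact that no pair with one vertex from each side separates to force all four cross edges. The paper's version is just terser, but the logic is identical.
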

\begin{proof}
  Since every vertex is in $L$ or $R$, the graph $Q$ has precisely four vertices. Since no two 
vertices from different sides of $d$ separate,
$Q$ must contain all four edges joining the endvertices of these edges. Hence $Q$ is a the special 
$K_4$. 
\end{proof}

\begin{lem}\label{triangle_case}
If both graphs $Q[L]$ and $Q[R]$ are triangles, then $(Q,d)$ is the special prism or 
has a (proper) special $K_4$ as a special contraction minor. 
\end{lem}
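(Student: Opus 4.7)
The plan is to analyse the bond $d$ via the simple bipartite graph $B$ of $d$-edges between the 3-vertex sets $U = V(L)$ and $W = V(R)$. Since $Q[L]$ and $Q[R]$ are triangles, $V(Q) = U \cup W$ is a disjoint union of six vertices. Viewing $B$ as a $3\times 3$ $0/1$ incidence matrix, I would rephrase the hypothesis that $Q$ is 3-connected along $d$ as the condition $(\ast)$: for every pair $(u_i, w_j)$, the $2\times 2$ submatrix obtained by deleting row $i$ and column $j$ contains at least one $1$. This is because $Q - u_i - w_j$ consists of the two remaining $L$- and $R$-triangle edges together with the surviving $B$-edges, so it is connected precisely when $B$ has an edge from $U \setminus \{u_i\}$ to $W \setminus \{w_j\}$.

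In the case $|B| = 3$, $(\ast)$ quickly forces every row and every column of $B$ to be nonempty (otherwise two rows would each need at least two $1$'s, exceeding three), so $B$ is a perfect matching. Then $Q$ is the prism and $d$ is the matching between its triangles, i.e.\ $(Q, d)$ is the special prism.

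In the case $|B| \geq 4$, I would look for a $1$-entry $(c, r)$ of $B$ such that row $c$ and column $r$ each contain at least two $1$'s and the $2\times 2$ submatrix obtained by deleting row $c$ and column $r$ contains a $1$. Given such $(c, r)$, write $\{a, b\} = \{1,2,3\} \setminus \{c\}$ and $\{p, q\} = \{1,2,3\} \setminus \{r\}$ and contract the $L$-edge $u_a u_b$ and the $R$-edge $w_p w_q$. After reducing parallel classes --- using that every parallel class meeting $d$ lies entirely in $d$ --- the resulting $Q'$ has four vertices $u^*, u_c, w^*, w_r$; the triangles $Q[L]$ and $Q[R]$ collapse to single edges $u^* u_c$ and $w^* w_r$; and the three conditions on $(c, r)$ translate exactly into the presence of all four cross-edges of $d'$. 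Hence $Q' \cong K_4$ with $d'$ a bond of size $4$, so $(Q', d')$ is the special $K_4$, and it is a proper minor since $|V(Q)| = 6$.

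The crux is proving the existence of such $(c, r)$ when $|B| \geq 4$. Suppose for contradiction no such entry exists; call a row or column \emph{thick} if it contains at least two $1$'s, otherwise \emph{thin}. If a $1$-entry $(c, r)$ lies in a thick row and a thick column, then by assumption its $2\times 2$ complement is all zero, meaning every $1$ of $B$ lies in row $c$ together with column $r$. But then $(\ast)$ applied at $(u_c, w_r)$ itself requires a $1$ in that complement, a contradiction. So every $1$-entry lies in a thin row or a thin column. A short double count --- each entry of a thick row sits in a thin column, each of which carries at most one entry, and symmetrically --- then forces at most one thick row and at most one thick column, rigidly pinning $B$ down to a specific pattern of four $1$'s. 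This pattern again fails $(\ast)$ at a specific entry, yielding the final contradiction and proving the lemma.
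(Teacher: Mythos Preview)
Your proof is correct and follows the same overall strategy as the paper: encode $d$ as a bipartite graph $B$ between the two triangles, show that $|B|=3$ forces a perfect matching (the special prism), and for $|B|\ge 4$ contract one triangle-edge on each side to obtain the special $K_4$.

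The only substantive difference is in how the contractions are chosen when $|B|\ge 4$. You look for a $1$-entry $(c,r)$ lying in a thick row \emph{and} a thick column \emph{and} with nonempty $2\times 2$ complement, and then contract the opposite triangle-edges $u_au_b$ and $w_pw_q$; the requirement that $(c,r)$ itself be a $1$ (to supply the cross-edge $u_cw_r$) is what drives your contradiction argument about thick and thin rows and columns. The paper avoids this extra work: it simply picks any $\ell\in Q[L]$ with at least two neighbours in $Q[R]$ and any $r\in Q[R]$ with at least two neighbours in $Q[L]$ (these exist by pigeonhole), uses 3-connectedness along $d$ at the pair $(\ell,r)$ to obtain an edge $\ell'r'$ avoiding both, and then chooses \emph{which} triangle-edge to contract on the $L$-side so that $r$ ends up adjacent to both surviving $L$-vertices (and symmetrically on the $R$-side). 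This sidesteps the need for $\ell$ and $r$ to be adjacent and makes the existence step a one-liner. Your route works, but the paper's is shorter. As a minor presentational point, you should note that $(\ast)$ already forces $|B|\ge 3$ (your own row-counting argument gives this), so that the two cases you treat are exhaustive.
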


\begin{proof}
 If $Q$ has only three edges between $Q[L]$ and $Q[R]$, then as $Q$ is 3-connected along $d$, these 
edges must form a matching. So $(Q,d)$ is the special prism. 

Thus we may assume that $Q$ has at least four edges between $Q[L]$ and $Q[R]$. So $Q[L]$ and $Q[R]$ 
each contain a vertex that has at least two neighbours on the other side. Call these vertices 
$\ell$ and $r$. Since $\ell$ and $r$ do not separate, there is an edge $\ell'r'$ between $Q[L]$ and 
$Q[R]$ that is not incident with $\ell$ and $r$. By symmetry, we may assume that $\ell$ and $\ell'$ 
are in $Q[L]$, and $r$ and $r'$ are in $Q[R]$. As $r$ has two neighbours in $Q[L]$, we can contract 
a single edge of $Q[L]$ different from $\ell\ell'$ such that $r$ is adjacent to the two remaining 
vertices of $Q[L]$. Similarly, we contract an edge of $Q[R]$ different from $rr'$ such that the 
vertex of $\ell$ is adjacent to the two remaining 
vertices of $Q[R]$. The resulting contraction is a special 
$K_4$.
\end{proof}

\begin{lem}\label{case_ana}
If $Q[L]$ is a single edge and $Q[R]$ is a triangle, then $(Q,d)$ has a special $K_4$ as a (proper)
special 
contraction minor. 
\end{lem}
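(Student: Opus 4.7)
The plan is to contract a single edge of the triangle $Q[R]$ --- necessarily an edge outside $d$ --- so as to reduce $Q$ to a four-vertex graph, and then verify that this graph is $K_4$ with $d$ cutting it into two sides of size two. Let $\ell_1\ell_2$ be the unique edge of $Q[L]$ and write $r_1,r_2,r_3$ for the vertices of the triangle $Q[R]$, so that $V(Q)=\{\ell_1,\ell_2,r_1,r_2,r_3\}$.

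My first step is to exploit 3-connectedness along $d$ to show that each $\ell_i$ has at least two neighbors among $\{r_1,r_2,r_3\}$. Fixing $i\in\{1,2\}$ and $j\in\{1,2,3\}$, the pair $(\ell_i,r_j)$ takes one vertex from each side of $d$, so $Q-\ell_i-r_j$ is connected. The two remaining vertices $r_k$ (for $k\neq j$) are already joined by a triangle edge, so the lone leftover vertex $\ell_{3-i}$ can only be reached if it has a neighbor in $\{r_1,r_2,r_3\}\setminus\{r_j\}$. Varying $j$ over its three values forces $\ell_{3-i}$, and by symmetry $\ell_i$, to have at least two neighbors in $\{r_1,r_2,r_3\}$.

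Since $|N(\ell_1)\cap\{r_1,r_2,r_3\}|+|N(\ell_2)\cap\{r_1,r_2,r_3\}|\ge 4>3$, by pigeonhole $\ell_1$ and $\ell_2$ share a common right-neighbor, which after relabelling I may assume is $r_1$. I would then contract the triangle edge $r_2r_3$ (not in $d$), let $Q'$ be the resulting simple graph with contraction vertex $r_{23}$, and set $d'=d\cap E(Q')$.

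To finish, I would check that $(Q',d')$ is the special $K_4$. The vertex set of $Q'$ is $\{\ell_1,\ell_2,r_1,r_{23}\}$; the edge $\ell_1\ell_2$ survives on the left, and the parallel pair arising from $r_1r_2$ and $r_1r_3$ collapses to the single edge $r_1r_{23}$ on the right. Each $\ell_i$ is joined to $r_1$ (our common neighbor) and, because $\ell_i$ has at least one neighbor in $\{r_2,r_3\}$, also to $r_{23}$, giving all four cross-edges of the bond. Hence $Q'\cong K_4$ with $d'$ a bond of size four whose two sides each carry one edge, i.e., $(Q',d')$ is the special $K_4$; since we contracted an edge, it is a proper special contraction minor. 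The only point of subtlety is confirming that the parallel-class collapses after contraction do not accidentally destroy any of the four cross-edges, but this is immediate since each $\ell_i$ contributes at most one surviving edge to $r_1$ and at most one to $r_{23}$; there is no substantive obstacle in the argument.
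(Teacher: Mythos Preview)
Your proof is correct and follows essentially the same approach as the paper: show that each vertex of the left edge has at least two neighbours in the triangle (you use the connectivity of $Q-\ell_i-r_j$ directly, the paper observes that a single right-neighbour would give a mixed 2-separator---these are the same argument), use pigeonhole to find a common neighbour, and contract the opposite triangle edge to obtain the special $K_4$. Your additional verification that the four cross-edges survive the parallel-class collapse is a welcome bit of care that the paper leaves implicit.
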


\begin{proof}
We denote the edge in $Q[L]$ by $vw$. Since $Q$ is 2-connected, each of $v$ and $w$ has a neighbour 
in $Q[R]$. If one of them has only a single neighbour in $Q[R]$, then that neighbour together with 
the other endvertex of $vw$ is 2-separator. This is impossible as $Q$ is 3-connected along $d$.

Hence $v$ and $w$ have each at least two neighbours in $Q[R]$. So there is a vertex $x$ in 
$Q[R]$ adjacent to $v$ and $w$. Contracting the edge not incident with $x$ to a single vertex, 
yields a special $K_4$ as a special contraction minor. 
\end{proof}

\begin{proof}[Proof of \autoref{minor_char_H}.]
By taking $(Q,d)$ contraction-minimal, we may assume that it is irreducible. We will show that 
$(Q,d)$ is a 
special prism or a special $K_4$.
If both graphs $Q[L]$ and $Q[R]$ are 2-connected, then by \autoref{both_2_con} (and the same lemma 
applied with the roles of `$L$' and `$R$' interchanged) both of them are 
triangles. In this case, by \autoref{triangle_case} $(Q,d)$ is a special prism. 

Otherwise one of $Q[L]$ or 
$Q[R]$ is not 2-connected. By \autoref{not_2_con} (and the same lemma 
applied with the roles of `$L$' and `$R$' interchanged) 
it consists of a single edge. Hence we may assume that one of the two graphs $Q[L]$ and $Q[R]$ must 
be a single edge. 
By combining \autoref{not_2_con} with \autoref{both_2_con}, we deduce that the other graph must be a 
single edge or a triangle.
It cannot be a triangle by \autoref{case_ana}. Hence $(Q,d)$ is a special $K_4$ by 
\autoref{edge_case} in this case. 
\end{proof}

  \begin{proof}[Proof of \autoref{main_intro}.]
  We have just finished the proof of \autoref{minor_char_H}. And just after the statement of that 
lemma we showed that it implies \autoref{main_intro}. 
  \end{proof}

\section{Concluding remarks}\label{sec:conj}

There are various ways how \autoref{main_intro} might be extended. First, can we replace 
`constraint connectedness' by the property that the set $X$ has at most $k$ connected components 
for some natural number $k$? More precisely, a constraint graph $(G,X)$ has at \emph{most $k$ 
islands} if $G[X]$ has at most $k$ connected components. Clearly, the class of constraint graph 
with at most $k$ islands is closed under taking constraint minors. 

\begin{con}\label{kcon}
 Let $k>1$. The class of 3-connected constraint graphs with at most $k$ islands is 
characterised by a finite list of excluded constraint minors.
\end{con}
Can you explicitly compute the list of excluded minors in \autoref{kcon}?

Another extension is as follows. A \emph{double-constraint matroid} $(M,X,Y)$ consists of a 
matroid $M$ and two sets $X$ and $Y$ of its elements. It is \emph{realisable} if $M$ is the cycle 
matroid of 
a graph $G$ such that both $X$ and $Y$ are connected in $G$. Can you extend \autoref{new_intro} 
from constraint matroids to double-constraint matroids? Put another way: is a double-constraint 
matroid realisable if and only if it does not have one of finitely many excluded double-constraint 
minors? Although for 3-connected matroids, the answer to this question follows from 
\autoref{new_intro}, for matroids that are not 3-connected new obstructions arise, see 
\autoref{fig:2con}
   \begin{figure} [htpb]   
\begin{center}
   	  \includegraphics[height=2cm]{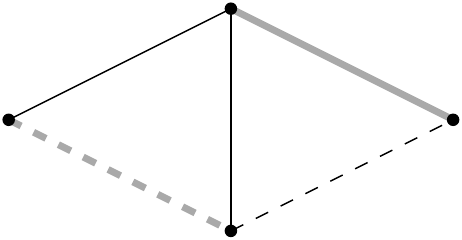}
   	  \caption{The constraint $X$ is depicted in grey, the constraint $Y$ is 
dashed. Although the matroid represented by this graph is realisable for each of 
$X$ or $Y$, it is not realisable for both of them at the same time.}\label{fig:2con}
\end{center}
   \end{figure}

\bibliographystyle{plain}
\bibliography{literatur}

\end{document}